\newtheorem{theorem}{Theorem}[section]
\newtheorem{proposition}[theorem]{Proposition}
\newtheorem{corollary}[theorem]{Corollary}
\newtheorem{lemma}[theorem]{Lemma}
\theoremstyle{definition}
\newtheorem{definition}[theorem]{Definition}
\newtheorem{example}[theorem]{Example}
\newtheorem{remark}[theorem]{Remark}
\newcommand{\Z}{\mathbb{Z}}
\newcommand{\R}{\mathbb{R}}
\newcommand{\C}{\mathbb{C}}
\newcommand{\kk}{\mathbbm{k}}
\newcommand{\PP}{\mathbb{P}}
\newcommand{\im}{\mathrm{im}}
\newcommand{\cat}[1]{\mathrm{#1}}
\renewcommand{\mod}[1]{{#1}\mathrm{-mod}}
\newcommand{\perv}[2]{{}^{#1}\cat{Perv}\!\left(#2\right)}
\newcommand{\pfun}[2]{{}^{#1}{#2}}
\newcommand{\ic}[2]{\pfun{#1}{\sh{IC}_{#2}}}
\newcommand{\constr}[2]{\cat{D}_{#1}({#2})}
\newcommand{\der}[2]{\cat{D}^{#1}({#2})}
\newcommand{\Mor}[3]{{\mathrm{Hom}}_{#1}\!\left(#2,#3\right)}
\newcommand{\Ext}[4]{{\mathrm{Ext}_{#1}^{#2}}\!\left(#3,#4\right)}
\newcommand{\End}[1]{\mathrm{End}\!\left(#1\right)}
\newcommand{\sh}[1]{\mathcal{#1}}
\newcommand{\pt}{{\bf pt}}
\newcommand{\stan}[2]{\pfun{#1}{\Delta_{#2}}}
\newcommand{\costan}[2]{\pfun{#1}{\nabla_{\! #2}}}
\newcommand{\stab}[1]{\mathrm{Stab}(#1)}
\title[Wall and chamber structure for perverse sheaves on projective spaces]{Wall and chamber structure for a special biserial algebra coming from perverse sheaves on $\mathbb{P}^n$}
\author{Alessio Cipriani, Martina Lanini}
\address{Dipartimento di Informatica - Settore di Matematica, Universit\`a degli Studi di Verona, Strada le Grazie 15 - Ca’ Vignal, I-37134 Verona, Italy}
\email{alessio.cipriani@univr.it}
\address{Dipartimento di Matematica, Universit\`a di Roma “Tor Vergata”, Via della Ricerca Scientifica 1, I-00133 Rome, Italy}
\email{lanini@mat.uniroma2.it}
\begin{document}

\subjclass[2020]{16G20} 
\maketitle

\begin{abstract}
    We describe the wall and chamber structure of a special biserial algebra whose module category is equivalent to the category of (middle) perverse sheaves on the complex projective space $\mathbb{P}^n$.
    In particular, by the well known classification of indecomposable modules for special biserial algebras, we deduce that the algebra of interest is of finite representation type and we provide an explicit description of the walls of the structure.
    By a result of Bridgeland this wall and chamber structure coincides with the chamber structure in an open subset of the space of stability conditions on the bounded derived category of constructible sheaves on $\mathbb{P}^n$.
\end{abstract}

\section{Introduction}
This work stems from the desire of understanding Bridgeland space of stability conditions for the bounded derived category of complexes on complex projective spaces constructible with respect to the Schubert stratification. Stability spaces for triangulated categories were introduced by Bridgeland in \cite{bridgeland07} inspired by the work of Douglas on string theory \cite{douglas}. Many progresses have been made in the study of stability spaces for derived categories of coherent sheaves, while the case of bounded derived categories of constructible complexes, treated in the $\mathbb{P}^1$ case in \cite{MR2739061}, has had less fortune so far. One of the reasons to look at the bounded derived category $\constr{c}{X}$ of constructible sheaves on some variety $X$ is that, if the stratification considered on $X$ is by affine subvarieties whose cohomology is concentrated in one degree (e.g. flag varieties with Schubert stratification), then the abelian category of middle perverse sheaves on $X$ is a faithful heart of $\constr{c}{X}$, that is a heart whose derived category is $\constr{c}{X}$  itself.  By \cite{MR4349392} the category of middle perverse sheaves can be realised as a module category for a quiver with relations, so that our triangulated category of interest is now the derived category of a quiver module category, in particular of modules for a finite dimensional algebra.

Stability conditions for the category of modules over a finite dimensional algebra were defined and investigated since the Nineties \cite{king}. The definition of wall and chamber structure for a finite dimensional algebra is due to Bridgeland \cite{bridgelandWC} and links the geometric and algebraic notions of stability condition spaces in the following sense: understanding the wall and chamber structure for the category $(Q,I)-\textrm{rep}$ of representations of a quiver with relations is equivalent to understanding wall crossing phenomena in an open subset of Bridgeland stability space for the derived category of $(Q,I)-\textrm{rep}$. In fact in some cases it is enough to understand the wall and chamber structure to obtain all the needed information to control Bridgeland's space of stability conditions \cite{bousseau}.

The abelian category of middle perverse sheaves on the complex projective space $\mathbb{P}^n$ with Schubert stratification is known to be equivalent to the category of modules for a particularly nice quiver with relations $(Q(n), I(n))$ (see for instance \cite[Example 1.1]{stroppelTQFT}) whose corresponding algebra $B(n)$ is special biserial. Special biserial algebras have been widely studied for very many years, starting from \cite{SW}, where they were defined. This is certainly due to the fact that their representation theory is extremely well understood and the classification of their indecomposable modules is purely combinatorial, as explained in \cite{Butler-Ringel, WW}. In particular, their indecomposable modules are subdivided into three classes: string modules, band modules and non-uniserial injective-projective modules. We show that our special biserial algebra $B(n)$ does not have any band module (Remark \ref{proposition : no bands}). Since the other two classes of modules are finite, we can deduce that the algebra of interest is of finite representation type. After this paper was written, we discovered that the classification of indecomposable $B(n)$-modules was exploited in \cite{Pe} to investigate exceptional $B(n)$-sequences. Nevertheless, we were not able to find a proof in the literature. As for the wall and chamber structure, the stability spaces corresponding to non thin indecomposable $B(n)$-modules are contained in the walls corresponding to thin modules, so that we are left with a wall and chamber structure which only involves indecomposable thin modules (Proposition \ref{theorem : rooty_walls}). Walls for string modules have been explicitly described in the acyclic case by \cite{asai} via facial description and in \cite{string_and_band} via the dual description by ray generators any polyhedral cones. It would have been possible to invoke \cite{string_and_band} to prove Proposition \ref{theorem : rooty_walls} and to deduce the wall and chamber structure of the category of middle perverse sheaves on the complex projective space from \cite{string_and_band} once the indecomposable thin $B(n)$-modules had been interpreted as thin modules for an acyclic quiver with relations. Nevertheless, since the combinatorics of indecomposable modules for our algebra is very explicit, we tried to keep this paper as self contained as possible (apart from the classification of indecomposable modules) and hence we provide independent proofs of the non relevance of non thin modules, as well as
the wall facial description, 
both relying on the nice combinatorics of strings on $(Q(n), I(n))$ (see Section \S\ref{sec : string combinatorics}). Such nice combinatorics translates into perverse sheaves language: the perverse sheaves corresponding to the walls of the structure can be constructed as iterated extensions of appropriate simples, standard and costandard objects (Proposition \ref{prop : perverse walls}).

Since our motivation came from geometry, we did not investigate the consequences of our main theorem to silting theory. By \cite{asai} and \cite{BST19} the chambers in the wall and chamber structure of a finite dimensional algebra are in bijection with its 2-silting complexes (up to isomorphism), and two 2-silting complexes corresponding to adjacent chambers are related by a mutation. Therefore, determining
the number of isomorphism classes of 2-silting complexes for $B(n)$ becomes the purely combinatorial problem of counting the maximal cones of the polyhedral fan complex resulting from Theorem \ref{theorem:explicit_descr_stab_spaces}. 

\subsection{Structure of the paper} In Section \S\ref{section:background} we collect some basic facts on quivers with relations and walks on them; we also define our main character $(Q(n), I(n))$. In the following Section \S\ref{sec : string combinatorics} we investigate the combinatorial behaviour of strings on $(Q(n), I(n))$ and prove that there are no bands  on $(Q(n), I(n))$. Section \S\ref{sec : RepTh of B(r)} deals with the representation theory of the finite dimensional algebra $B(n)$ corresponding to $(Q(n), I(n))$. In particular, we recall the parameterization of isomorphism classes of indecomposable modules and we apply the string combinatorics to deduce that $B(n)$ is of finite representation type with a very explicit realisation of the indecomposable objects.  In Section \S\ref{sec:WCstructure} we recall King's definition of semistable objects for a finite dimensional algebra and we describe the stability spaces (via inequalities) of any indecomposable module. The facial description of the wall and chamber structure of $B(n)$ is given in Theorem \ref{theorem:explicit_descr_stab_spaces}. Finally, Section \S\ref{sec:perverse} is about the geometric counterpart of our result: more precisely, we explicitly describe the perverse sheaves corresponding to the walls of the structure.

\subsection*{Acknowledgements} 
We thank Jon Woolf for many motivating and enlightening discussions, and Catharina Stroppel for very useful correspondence. We also thank Lidia Angeleri H\"ugel for her feedback on a first version of this paper, and Walter Mazorchuk for pointing out \cite{Pe} to us. Finally, many thanks to the anonymous referee for several very helpful comments. The work of A.C. was funded by MUR PNRR--Seal of Excellence,
CUP B37G22000800006, and the project REDCOM: \emph{Reducing complexity in algebra, logic, combinatorics}, financed by the programma Ricerca Scientifica di Eccellenza 2018 of the Fondazione Cariverona. M.L. was partially funded by the Fondi di Ricerca Scientifica di Ateneo 2021 CUP E853C22001680005, and the MUR Excellence Department Project awarded to the Department of Mathematics, University of Rome Tor Vergata, CUP E83C23000330006. Both the authors are members of the network INdAM-G.N.S.A.G.A.

\section{Reminder on special biserial algebras and walks on their quivers}\label{section:background}
We recall here some basics on special biserial algebras and their representation theory, see for example \cite{SW,WW, Butler-Ringel}.

Throughout the paper we will denote the interval of natural numbers $\{i,i+1,\ldots, j\}$ by  $[i,j]$, and use the short hand notation $[m]=[1,m]$.

A quiver $Q=(Q_0, Q_1)$ is an oriented graph, with vertex set $Q_0$ and arrow set $Q_1$. For any algebraically closed field $\kk$, we denote by $\kk Q$ its path algebra, that is the algebra obtained by concatenating paths on the graph, see for instance \cite[Chapter 2, \S1]{MR2197389}. If $F$ is the ideal of $\kk Q$ generated by the arrows, we say that a further ideal $I$ is \emph{admissible} whenever there is an $m$ such that $F^m\subseteq I\subseteq F^2$. The algebra $\kk Q/I$ is said \emph{special biserial} if $I$ is admissible and the following two extra conditions hold
\begin{itemize}
    \item every vertex is source of at most two arrows and is target of at most two arrows,
    \item for any arrow $\gamma$ there is at most one arrow $\delta$ such that the composition $\gamma\circ \delta\not\in I$ and at most one arrow $\delta'$ such that $\delta'\circ \gamma\not\in I$.  
\end{itemize}
If $\kk Q/I$ is special biserial, then $I$ is generated by monomials (that is paths on the quiver) and binomials (that is relations such as $a\gamma-b\delta$, with $a,b\in\kk^\times$ and $\gamma,\delta$ paths on $Q$), as explained in \cite[Corollary of Lemma 1]{SW}.

In this paper we deal with the quiver $Q(n)$, that is the quiver with 
\begin{itemize}
    \item vertices $i\in[0,n]$, 
        \item arrows $\alpha_i\colon  i+1\to i$ and $\beta_i\colon i\to i+1$ with $i\in[0,n-1]$,
\end{itemize}
and we consider the ideal of relations $I(n)$ generated by 
\[\beta_i\alpha_i-\alpha_{i+1}\beta_{i+1}, \quad \alpha_i\alpha_{i+1},\quad  \beta_{i+1}\beta_i\qquad (i\in[0,n-2]), \quad \alpha_0 \beta_0.
\]

\begin{remark}
  The quiver with relation $(Q(n), I(n))$ coincides with the one appearing in \cite[Example 1.1.]{stroppelTQFT}. 
\end{remark}

For instance, when $n=2$ we have the quiver
\begin{equation}\label{equation:quiver_Q}
 Q(2)=\begin{tikzcd}
    2 \ar[r,bend left=20,"\alpha_1"]& 1\ar[r,bend left=20,"\alpha_0"]\ar[l,bend left=20,"\beta_1"] & 0 \ar[l,bend left=20,"\beta_0"]
\end{tikzcd}
\end{equation}
with relations
\begin{equation}\label{equation:relations_I}
    I(2)=\langle \beta_0\alpha_0-\alpha_1 \beta_1, \alpha_0\alpha_1,\beta_1\beta_0, \alpha_0\beta_0 \rangle.
\end{equation}

We denote by $B(n)$ the quotient algebra $\kk Q(n)/I(n)$. It is immediate to see that $B(n)$ is special biserial.

Given a quiver $Q=(Q_0,Q_1)$, we denote by $Q_1^*$ the set of formal inverses of the arrows of $Q$. If $\delta\in Q_1^*$, we write $\delta^*$ to denote the arrow in $Q_1$  whose formal inverse is $\delta$ (that is $(\gamma^*)^*=\gamma$). By convention, if $\gamma\in Q_1$ has source $s(\gamma)$ and target $t(\gamma)$ then we set $s(\gamma^*)=t(\gamma)$ and $t(\gamma^*)=s(\gamma)$.

\begin{definition} Let $Q=(Q_0, Q_1)$ be a quiver.
    \begin{enumerate}
        \item A \emph{walk of length $m\geq 1$ on $Q$} is a  sequence $w=\gamma_m\ldots\gamma_2\gamma_1$ with $\gamma_k\in Q_1\cup Q_1^*$ for any $k$ and $s(\gamma_{k+1})=t(\gamma_k)$ for all $k\leq m-1$.
        \item A \emph{walk on $Q$ of length 0 on the vertex $i$} is the trivial path which stays on such a vertex. We denote it by $\epsilon_i$.
        \item A \emph{path on $Q$} is a walk $w=\gamma_m\ldots\gamma_2\gamma_1$ with all $\gamma_k\in Q_1$.
        \item A walk is said to be \emph{reduced} if either $w$ is of length 0, or $w=c_1\ldots c_m$ and $c_i\neq c_{i+1}^*$ for any $i\in [m-1]$. 
    \end{enumerate}
\end{definition}

\begin{example}\label{exe:walk&path}
    Let us consider $Q(2)$ as defined in (\ref{equation:quiver_Q}).
    \begin{enumerate}[i)]
        \item The sequence $w=\alpha_1^*\beta_0$ is a  walk on $Q(2)$ (and it is not a path).
   \item    The sequence $w=\beta_0^*\alpha_1\beta_1$ is a reduced walk on $Q(2)$ (and it is not a path).
\item The sequence $w=\beta_1\beta_0$ is a path on $Q(2)$.
    \end{enumerate}
\end{example}

If $w=c_m\ldots c_1$ is a walk of length $m>0$, we denote by $w^*$ the walk $c_1^*\ldots c_m^*$. By convention, $\epsilon_i^*=\epsilon_i$ for any $i\in Q_0$. 
\begin{definition}Let $Q=(Q_0,Q_1)$ be a quiver.
 A walk $w$ on $Q$ is said to \emph{contain a non trivial path} $p$ if $p$ or $p^*$ are contained in $w$ (that is $p$ or $p^*$ can be obtained from the word of $w$ by removing a suffix and/or a prefix).
\end{definition}
\begin{example}Let us consider $Q(2)$ as defined in (\ref{equation:quiver_Q}).
    \begin{enumerate}[i)]
        \item The walk $w=\alpha_1^*\beta_0$ contains the length 1 paths $u_1=\alpha_1$ and $u_2=\beta_0$.
        \item The walk $w=\beta_0^*\alpha_1\beta_1$ contains the path $\alpha_1\beta_1$.
        \end{enumerate}
\end{example}

\begin{definition}
    Let $Q=(Q_0,Q_1)$ and let $I$ be an admissible ideal of $\kk Q$ such that $\kk Q/I$ is special biserial.
    A \emph{string on $(Q,I)$} is a reduced walk on $Q$ such that if $p$ is a non trivial path contained in $w$ then $p$ is not a monomial appearing as a summand of any of the generators of $I$.
\end{definition}

\begin{example}Let us consider $Q(2)$ as defined in (\ref{equation:quiver_Q}).
    \begin{enumerate}[i)]
        \item The walk $w=\alpha_1^*\beta_0$ is a string as none of its subpaths appear in $I(2)$.
        \item The walk $w=\beta_0^*\alpha_1\beta_1$ is not a string as the path $\alpha_1\beta_1$ is a monomial in the binomial relation $\beta_0\alpha_0-\alpha_1\beta_1\in I(2)$.
        \item The walk $\beta_1\beta_0\in I(2)$ and hence it is not a string.
   \end{enumerate}
\end{example}

\begin{definition} Let $Q=(Q_0,Q_1)$ be a quiver.
\begin{enumerate}
\item The \emph{source} $s(w)$, resp. the \emph{target} $t(w)$, of a walk $w=\gamma_m\ldots \gamma_1$ of length $s>0$ on $Q$ is $s(\gamma_1)$, resp. $t(\gamma_m)$. The trivial walk $\epsilon_i$ has source and target equal to $i$.
\item A \emph{cycle} is a walk $w$ of strictly positive length such that $s(w)=t(w)$ 
\item A \emph{reduced cycle} is a cycle $w=\gamma_m\ldots\gamma_1$ with $\gamma_{k}\neq \gamma_{k+1}^*$ for any $k\in[m-1]$ and the indices are taken modulo $m$.
    \item If $w$ is a reduced cycle we write $w^h$ for the path obtained by concatenating $w$ with itself $h$-times.
\end{enumerate}
\end{definition}

\begin{example}
 Let $Q$ be the equioriented Dynkin quiver of type $\Tilde{A_3}$, that is the quiver given by $Q_0=\{0,1,2\}$ and $Q_1=\{\lambda_0:0\to 1, \lambda_1:1\to 2, \lambda_2:2\to 0\}$. Let $I=\langle(\lambda_2\lambda_1\lambda_0)^3\rangle$. We consider walks on $(Q,I)$ 
 \begin{enumerate}\label{exple : equioriented cycle}
     \item The source of $w=\lambda_2\lambda_1$ is $s(w)=1$, while its target is $t(w)=0$. Therefore $w$ is not a cycle.
     \item Let $w=\lambda_2\lambda_1\lambda_0$. Then $s(w)=t(w)=0$ and so $w$ is a (reduced) cycle on $Q$.
 \end{enumerate}
\end{example}

\begin{definition}
    Let $(Q=(Q_0,Q_1)$ and let $I$ be an admissible ideal of $\kk Q$ such that $\kk Q/I$ is special biserial.
    A \emph{band on $(Q,I)$} is a non trivial reduced cycle  $w$ on $Q$ such that $w^h$ is a string path  on $(Q,I)$ for any $h\geq 1$ and such that it does not exist a string path $u$ of length strictly less than the length of $w$ with $u^k=w$ for some $k> 1$.
\end{definition}

\begin{example}
Let $Q$ be the Dynkin quiver of type $\Tilde{A_3}$ given by $Q_0=\{0,1,2\}$ and $Q_1=\{\lambda_0:0\to 1, \lambda_1:1\to 2, \lambda_2:0\to 2\}$ and $I=\emptyset$. Then $\lambda_2^*\lambda_1\lambda_0$ is a band on $(Q,I)$.
\end{example}

\section{Combinatorics of strings on $(Q(n), I(n))$}\label{sec : string combinatorics}
As we will see in the next section, the representation theory of a special biserial algebra is controlled by string and band combinatorics on the corresponding quiver with relations. We collect in this section some results of strings on our quiver $(Q(n),I(n))$ and leave the checks to the reader.

\begin{lemma}
Let $w=\gamma_m\ldots \gamma_1$ be a string on $(Q(n), I(n))$ of length $m\geq 2$. Then either $w\in\{\beta_{n-1}\alpha_{n-1}, \alpha_{n-1}^*\beta_{n-1}^*\}$ or $w$ does not contain any path of length $\geq 2$.
\end{lemma}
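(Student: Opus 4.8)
The plan is to analyze which paths of length $\geq 2$ can appear in a string on $(Q(n),I(n))$ by exploiting the very restrictive list of relations in $I(n)$. The key observation is that a path of length $\geq 2$ is a concatenation $\gamma\delta$ of two arrows with $t(\delta)=s(\gamma)$, and by the admissibility of $I(n)$ together with the defining bound ``for any arrow $\gamma$ there is at most one arrow $\delta$ such that $\gamma\delta\notin I(n)$,'' the possible length-$2$ subpaths of any string are extremely limited. So first I would enumerate all length-$2$ paths $\gamma\delta$ on $Q(n)$ and check them against the generators of $I(n)$: the monomials $\alpha_i\alpha_{i+1}$, $\beta_{i+1}\beta_i$ (for $i\in[0,n-2]$) and $\alpha_0\beta_0$, together with the binomials $\beta_i\alpha_i-\alpha_{i+1}\beta_{i+1}$. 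A path containing a subpath equal to a monomial generator is not a string; a path containing a subpath equal to one of the monomials $\beta_i\alpha_i$ or $\alpha_{i+1}\beta_{i+1}$ appearing in a binomial relation is likewise forbidden (by the definition of string, which excludes any non-trivial subpath that is a monomial appearing in a generator).

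Carrying this out, the length-$2$ paths on $Q(n)$ are, up to the two orientations: $\alpha_i\alpha_{i+1}$ (going down, $i+2\to i+1\to i$), $\beta_{i+1}\beta_i$ (going up, $i\to i+1\to i+2$), $\beta_i\alpha_i$ ($i+1\to i\to i+1$), and $\alpha_{i+1}\beta_{i+1}$ ($i\to i+1\to i$), plus the special one $\alpha_0\beta_0$ ($0\to 0$, which doesn't occur here since there is no loop — actually $\alpha_0\beta_0$ is the composite $1\to0\to1$). Going through the list: $\alpha_i\alpha_{i+1}$ and $\beta_{i+1}\beta_i$ are monomial relations, hence excluded; $\beta_i\alpha_i$ and $\alpha_{i+1}\beta_{i+1}$ are monomials appearing in the binomial relations, hence excluded; $\alpha_0\beta_0$ is a monomial relation, hence excluded. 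The only length-$2$ paths not killed by $I(n)$ are those that ``fall off the end'' of the quiver, namely $\beta_{n-1}\alpha_{n-1}$ (the path $n\to n-1\to n$, since $\beta_{n-1}\alpha_{n-1}$ pairs with $\alpha_n\beta_n$ in a relation only if $n\leq n-2$, which fails) — wait, I should double check: the binomial relation involving $\beta_{n-1}\alpha_{n-1}$ would be $\beta_{n-1}\alpha_{n-1}-\alpha_n\beta_n$, but $\alpha_n,\beta_n$ do not exist, so $\beta_{n-1}\alpha_{n-1}$ is \emph{not} a monomial appearing in any generator. Similarly its inverse $\alpha_{n-1}^*\beta_{n-1}^*$. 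These are the only survivors.

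Thus the proof structure is: (1) if $w$ contains a path $p$ of length exactly $2$, then $p$ (or $p^*$) must be one of the surviving paths, forcing $p\in\{\beta_{n-1}\alpha_{n-1},\alpha_{n-1}^*\beta_{n-1}^*\}$ — here one uses that every length-$\geq 2$ path contains a length-$2$ subpath, and a string cannot contain any monomial from a generator; (2) if $w$ contains a path of length $\geq 3$, derive a contradiction, since such a path would contain two overlapping length-$2$ subpaths, but the only allowed length-$2$ path $\beta_{n-1}\alpha_{n-1}$ cannot be extended on either side: preceding it by any arrow gives $\alpha_{n-1}\beta_{n-1}\alpha_{n-1}$ (contains the forbidden $\alpha_{n-1}\beta_{n-1}$) or is not a valid path, and following it similarly produces $\beta_{n-1}\alpha_{n-1}\beta_{n-1}$ (contains forbidden $\alpha_{n-1}\beta_{n-1}$). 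Hence $w$ itself, if it contains a length-$\geq 2$ path at all, contains no length-$3$ path, so the length-$2$ path it contains is all of $w$ (if $w$ contained strictly more, that would create a length-$3$ subpath), giving $w=\beta_{n-1}\alpha_{n-1}$ or $w=\alpha_{n-1}^*\beta_{n-1}^*$. The main obstacle is bookkeeping the orientation conventions and the ``$p$ or $p^*$ contained in $w$'' definition carefully so that the exhaustion over length-$2$ paths is genuinely complete; there is no deep difficulty, only the need to be systematic about the (finitely many, uniformly described) local configurations in $Q(n)$.
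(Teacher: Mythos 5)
Your approach is the same as the paper's: enumerate the length-two paths on $Q(n)$, observe that every one of them except $\beta_{n-1}\alpha_{n-1}$ is either a monomial generator of $I(n)$ or a monomial occurring in one of the binomial generators, and then argue that the surviving path cannot sit inside a longer string. The enumeration in your first step is complete and correct (including the boundary observation that the binomial partner $\alpha_n\beta_n$ of $\beta_{n-1}\alpha_{n-1}$ does not exist, so this path genuinely survives), and your second step correctly rules out any length-three path.

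However, your final inference has a gap. You conclude that the length-two path contained in $w$ must be all of $w$ because ``if $w$ contained strictly more, that would create a length-$3$ subpath.'' That implication is false for walks in general: a walk can extend a path by letters from $Q_1^*$ without creating any longer path --- this is exactly what happens in the alternating strings that make up the rest of the classification. What actually closes the argument here is reducedness, not the absence of length-three paths: if $w=\ldots\gamma_k\beta_{n-1}\alpha_{n-1}\ldots$ then $s(\gamma_k)=n$, and since the vertex $n$ is the source of only $\alpha_{n-1}$ and the target of only $\beta_{n-1}$, either $\gamma_k=\alpha_{n-1}$ (producing the forbidden subpath $\alpha_{n-1}\beta_{n-1}$, which you already excluded) or $\gamma_k=\beta_{n-1}^*$, which violates the reducedness condition $\gamma_k\neq\gamma_{k+1}^*$. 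The symmetric check applies to a letter following $\alpha_{n-1}$. Once this one-line case analysis is added, your proof is complete and coincides with the paper's.
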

    
    
 

\begin{corollary}\label{cor : types of strings}
There are three types of strings on  $(Q(n), I(n))$: 
\begin{enumerate}
    \item the trivial paths $\epsilon_i$, for $i\in[0,n]$,
    \item the cycles $\beta_{n-1}\alpha_{n-1}$ and $\alpha_{n-1}^*\beta_{n-1}^*$,
    \item the alternating walks, that is walks such as $\gamma_m\ldots\gamma_1$ and $\gamma_k\in Q(n)_1$ if and only if $\gamma_{k+1}\in Q(n)_1^*$ for any $k<m$.
\end{enumerate}
\end{corollary}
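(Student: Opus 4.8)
The plan is to derive the trichotomy as an immediate structural consequence of the preceding Lemma together with the definition of a string. First I would dispose of the degenerate case: if $w$ has length $0$, then by definition it is a trivial path $\epsilon_i$ for some $i\in[0,n]$, which is case (1). So assume from now on that $w=\gamma_1\ldots\gamma_m$ has length $m\geq 1$.

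Next I would split according to whether $w$ contains a path of length $\geq 2$. If it does, then the Lemma forces $w\in\{\beta_{n-1}\alpha_{n-1},\ \alpha_{n-1}^*\beta_{n-1}^*\}$; since $\beta_{n-1}\colon n-1\to n$ and $\alpha_{n-1}\colon n\to n-1$, both of these are cycles (their source and target are $n-1$, respectively $n$ after the $*$), giving case (2). The remaining possibility is that $w$ contains no path of length $\geq 2$. I claim this forces $w$ to be alternating in the sense of (3): for every $k<m$ the subwalk $\gamma_k\gamma_{k+1}$ is reduced (as $w$ is reduced) and is not a path, since a path of length $2$ is excluded; but "$\gamma_k\gamma_{k+1}$ is a reduced non-path of length $2$" says precisely that exactly one of $\gamma_k,\gamma_{k+1}$ lies in $Q(n)_1$ and the other in $Q(n)_1^*$ — for if both were in $Q(n)_1$ it would be a path, both in $Q(n)_1^*$ would make $\gamma_k\gamma_{k+1}=(\gamma_{k+1}^*\gamma_k^*)^*$ the inverse of a path of length $2$ contained in $w$, and the reducedness already rules out $\gamma_k=\gamma_{k+1}^*$ (which is the only way a mixed pair could fail to be a genuine length-$2$ walk segment). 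Applying this to every consecutive pair yields exactly the alternation condition in (3). For $m=1$ the condition in (3) is vacuous and $w$ is trivially an alternating walk.

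I do not anticipate a serious obstacle here: the Corollary is purely a repackaging of the Lemma, and the only point requiring a line of care is the equivalence "reduced length-$2$ walk that is not a path $\iff$ one honest arrow followed/preceded by one formal inverse", which is a small case check on the two-element set $\{$honest, inverse$\}$ with the reducedness constraint removing the degenerate mixed case $\gamma_k=\gamma_{k+1}^*$. One should also remember to note explicitly that the two exceptional strings from the Lemma are indeed cycles (checking $s(w)=t(w)$), so that the three cases are correctly named and visibly exhaust all strings.
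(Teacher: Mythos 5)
Your proposal is correct and is essentially the derivation the paper intends (the corollary is stated without proof as a direct unpacking of the preceding lemma): length $0$ gives case (1), a contained path of length $\geq 2$ forces the two exceptional walks of case (2), and otherwise each reduced consecutive pair must mix one honest arrow with one formal inverse, which is exactly the alternation of case (3). One trivial slip: both exceptional walks are cycles based at the vertex $n$ (for $\beta_{n-1}\alpha_{n-1}$ one has $s(w)=s(\alpha_{n-1})=n$ and $t(w)=t(\beta_{n-1})=n$), not at $n-1$; this does not affect the argument.
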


\begin{lemma}\label{lemma : string length 2 alternating}
    Let $w=\gamma_m\ldots\gamma_1$ be an alternating string of length $>0$ on $(Q(n), I(n))$. Let $f_w:[0,m]\to [0,n]$ be the function defined as
    \[ 
    f_w(0)=s(\gamma_1), \quad f_w(j)=t(\gamma_{j}), \ j\in [m].
    \]
    Then $f_w$ is strictly monotone.  Moreover $f_w$ is increasing if and only if $f_{w^*}$ is decreasing.
\end{lemma}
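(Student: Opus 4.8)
The plan is to analyze the local behavior of an alternating string and show that consecutive values of $f_w$ always move in the same direction. Recall that an alternating string $w = \gamma_1\ldots\gamma_m$ by Corollary \ref{cor : types of strings} has the property that honest arrows and formal inverses strictly alternate along the word. Since the only arrows in $Q(n)$ are $\alpha_i\colon i+1\to i$ (decreasing) and $\beta_i\colon i\to i+1$ (increasing), a single step of the walk either goes up by one or down by one on the vertex set $[0,n]$. So $f_w$ changes by exactly $\pm 1$ at each step and hence is automatically ``strictly'' in the sense of never being constant; the real content is that the sign of the increment never flips.

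First I would set up notation: for $j\in[m]$ write $\gamma_{m-j+1}$ for the arrow (or inverse) traversed at the $j$-th step, so that $f_w(j-1)=s(\gamma_{m-j+1})$ and $f_w(j)=t(\gamma_{m-j+1})$. If $\gamma_{m-j+1}$ is an honest arrow it is some $\alpha_i$ or $\beta_i$; if it is an inverse it is some $\alpha_i^*$ or $\beta_i^*$. In each of the four cases the increment $f_w(j)-f_w(j-1)$ is $+1$ (for $\beta_i$ and $\alpha_i^*$) or $-1$ (for $\alpha_i$ and $\beta_i^*$). Then I would argue by contradiction: suppose at two consecutive steps the increments have opposite signs, say step $j$ goes up and step $j+1$ goes down (the other case is symmetric). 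Going up at step $j$ means $\gamma_{m-j+1}\in\{\beta_i,\alpha_i^*\}$; going down at step $j+1$ means $\gamma_{m-j}\in\{\alpha_\ell,\beta_\ell^*\}$. Now I use the alternating condition: $\gamma_{m-j+1}$ and $\gamma_{m-j}$ are consecutive letters of $w$, so exactly one of them is an honest arrow and the other is a formal inverse. This rules out the pairs $(\beta_i,\beta_\ell^*)$ and $(\alpha_i^*,\alpha_\ell)$ immediately by direction bookkeeping, leaving $(\beta_i,\alpha_\ell)$ or $(\alpha_i^*,\beta_\ell^*)$; in each of these the composability condition $s(\gamma_{m-j+1})=t(\gamma_{m-j})$ forces a specific relation between $i$ and $\ell$, and one checks that the resulting length-two subword is (up to taking inverses) exactly $\beta_i\alpha_i$ or $\alpha_i^*\beta_i^*$ — but these are the \emph{non}-alternating strings of item (2) in Corollary \ref{cor : types of strings}, contradicting that $w$ is alternating. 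Hence all increments share a common sign and $f_w$ is strictly monotone.

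The last assertion, that $f_w$ is increasing iff $f_{w^*}$ is decreasing, follows from unwinding the definitions: $w^* = \gamma_m^*\ldots\gamma_1^*$, so the steps of $w^*$ are the steps of $w$ traversed in reverse order and with each arrow replaced by its inverse, which negates every increment; one checks directly from the displayed formula for $f_{w^*}$ that $f_{w^*}(j) = f_w(m-j)$, so monotonicity type is reversed. I expect the only mildly delicate point to be the bookkeeping in the contradiction step — correctly tracking which of the two consecutive letters is honest versus inverted and matching that against the source/target constraint — but this is a finite case check with four cases, two of which die instantly on parity grounds and two of which reduce precisely to the excluded cyclic strings $\beta_{n-1}\alpha_{n-1}$ and $\alpha_{n-1}^*\beta_{n-1}^*$ (forcing $i=\ell=n-1$).
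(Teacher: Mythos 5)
Your overall strategy --- reduce to showing that two consecutive increments of $f_w$ cannot have opposite signs, then do a finite case check on the pair of consecutive letters --- is sound and is essentially the same local analysis the paper runs (the paper packages it as an induction on $m$, peeling off $\gamma_1$). The treatment of the final assertion via $f_{w^*}(j)=f_w(m-j)$ is also correct. However, the case analysis in your contradiction step is inverted, and this is a genuine gap rather than a typo, because the condition that actually does the work never gets invoked.

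Concretely: with step $j$ going up and step $j+1$ going down, the four candidate pairs $(\gamma_{m-j+1},\gamma_{m-j})$ are $(\beta_i,\alpha_\ell)$, $(\beta_i,\beta_\ell^*)$, $(\alpha_i^*,\alpha_\ell)$, $(\alpha_i^*,\beta_\ell^*)$. The alternating condition (exactly one letter honest) kills $(\beta_i,\alpha_\ell)$ and $(\alpha_i^*,\beta_\ell^*)$ --- these are the ones with both letters honest, resp.\ both inverse --- and it \emph{leaves alive} the mixed pairs $(\beta_i,\beta_\ell^*)$ and $(\alpha_i^*,\alpha_\ell)$, which is the opposite of what you assert. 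Those surviving mixed pairs are perfectly composable (the walk condition $t(\gamma_{m-j+1})=s(\gamma_{m-j})$ --- note your displayed composability condition has source and target swapped --- forces $\ell=i$) and they describe an immediate backtrack $\beta_i^*\beta_i$ or $\alpha_i\alpha_i^*$; the only thing excluding them is that a string is by definition a \emph{reduced} walk ($\gamma_k\neq\gamma_{k+1}^*$), a hypothesis your argument never uses. Without reducedness the statement is simply false: $\beta_i^*\beta_i$ is an alternating walk violating monotonicity. Separately, the subword identification you offer for the pairs you do analyze is off: in the up-then-down configuration the both-honest pair gives the subword $\alpha_i\beta_i$ (forbidden for every $i$, as a monomial of $\alpha_0\beta_0$ or of the binomial $\beta_{i-1}\alpha_{i-1}-\alpha_i\beta_i$), not the type-(2) cycle $\beta_{n-1}\alpha_{n-1}$; the cyclic strings only arise in the down-then-up configuration with $i=n-1$. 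The fix is simply to swap the two halves of your argument: alternation kills the unmixed pairs outright (no subword discussion needed), and reducedness kills the mixed ones.
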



    

\begin{example}
    Let us consider the string $w=\beta_0^*\alpha_1$ on $(Q(2),I(2))$. Then, we have \[
    \begin{cases}
        f_w(0)&=s(\alpha_1)= f_{w^*}(2)=2 \\ f_w(1)&=t(\alpha_1)=f_{w^*}(1) =1 \\ f_w(2)&=t(\beta_0^*)=f_{w^*}(0)=0
    \end{cases}.
    \]
    Note that $f_w$ is strictly decreasing and that $f_{w^*}$ is strictly increasing.
\end{example}

\begin{lemma}\label{lemma : fw bijection on image}
    Let $w=\gamma_m\ldots\gamma_1$ be an alternating string of length $>0$ on $(Q(n), I(n))$ and let $f_w$ be defined as in Lemma \ref{lemma : string length 2 alternating}. If $f_w$ is increasing, then $\im f_w=[s(w), t(w)]$ and hence $f_w:[0,m]\to [s(w), t(w)]$ is bijective.
\end{lemma}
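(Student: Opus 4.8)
The plan is to combine the strict monotonicity supplied by Lemma~\ref{lemma : string length 2 alternating} with the elementary observation that in $Q(n)$ every arrow (and hence every letter of a walk) connects two vertices that differ by exactly $1$. Since $f_w$ is strictly increasing it is in particular injective, so the whole content is to pin down its image. First I would record the two endpoint values: directly from the definition of $f_w$ and of source/target of a walk one has $f_w(0)=s(\gamma_m)=s(w)$ and $f_w(m)=t(\gamma_{m-m+1})=t(\gamma_1)=t(w)$.

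The key step is to show that $f_w$ moves by unit steps. Fix $j\in[0,m-1]$. For $j\geq 1$ the letter $\gamma_{m-j}$ satisfies $s(\gamma_{m-j})=t(\gamma_{m-j+1})=f_w(j)$ (because $\gamma_{m-j}\gamma_{m-j+1}$ is a subwalk of $w$) and $t(\gamma_{m-j})=f_w(j+1)$; for $j=0$ one has instead $f_w(0)=s(\gamma_m)$ and $f_w(1)=t(\gamma_m)$ directly. In either case $f_w(j)$ and $f_w(j+1)$ are the source and the target of a single element of $Q(n)_1\cup Q(n)_1^*$. Since $Q(n)$ has no loops and its only arrows $\alpha_i,\beta_i$ connect $i$ with $i+1$, we get $|f_w(j+1)-f_w(j)|=1$. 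As $f_w$ is increasing this forces $f_w(j+1)=f_w(j)+1$ for every $j\in[0,m-1]$.

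By induction it follows that $f_w(j)=s(w)+j$ for all $j\in[0,m]$, so that $\im f_w=\{s(w),s(w)+1,\dots,s(w)+m\}$ is a full interval; evaluating at $j=m$ and using $f_w(m)=t(w)$ gives $s(w)+m=t(w)$, whence $\im f_w=[s(w),t(w)]$. Thus $f_w\colon[0,m]\to[s(w),t(w)]$ is an injection onto $[s(w),t(w)]$, i.e.\ a bijection.

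I do not expect a genuine obstacle here: the argument is short once Lemma~\ref{lemma : string length 2 alternating} is in hand. The only points requiring care are the index bookkeeping in the definition of $f_w$ (the reversal $j\mapsto m-j+1$ and the special role of $f_w(0)$) and making sure the "unit step" claim is correctly deduced from the specific shape of $Q(n)$ (no loops, arrows only between consecutive vertices), which is what rules out both a zero step and a step of size $\geq 2$.
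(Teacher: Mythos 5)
Your argument is correct and is essentially the paper's own proof: both combine the strict monotonicity from Lemma~\ref{lemma : string length 2 alternating} with the fact that every arrow of $Q(n)$ joins consecutive vertices, forcing $f_w(j)=f_w(0)+j$ and hence surjectivity onto $[s(w),t(w)]$. Your version merely makes the index bookkeeping more explicit.
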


We have already observed that that if $w$ is a, then $w^*$ is also a string. The \emph{$*$-class of a string} is the set $\{w, w^*\}$.
The following lemma parameterizes the $*$-classes of strings on $(Q(n), I(n))$.
\begin{lemma}\label{ lemma : parameterisation of strings}
The set of $*$-classes of alternating walk strings on $(Q(n), I(n))$ is in bijection with the following set
    \[
    \mathscr{S}=\left\{(a,b,\eta)\mid 0\leq a<b\leq n, \ \eta\in \{\pm1\} \right\}.
    \]
\end{lemma}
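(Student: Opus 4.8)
The plan is to exhibit an explicit bijection between the set $\mathscr{S}$ and the $*$-classes of alternating walk strings, using the function $f_w$ from Lemma \ref{lemma : string length 2 alternating} as the main tool. Given a $*$-class $\{w, w^*\}$ of an alternating walk string, exactly one of $w, w^*$ has the property that $f_w$ is increasing (by the last sentence of Lemma \ref{lemma : string length 2 alternating}); pick that representative $w$. By Lemma \ref{lemma : fw bijection on image}, $f_w$ is then a strictly increasing bijection $[0,m]\to[s(w),t(w)]$, so the walk $w$ is entirely determined by the pair $(a,b) := (s(w), t(w))$ together with one extra bit of data recording which of the two "shapes" of alternating walk it is — whether $w$ starts (reading $\gamma_1$) with an honest arrow or with a formal inverse. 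I would encode this bit as $\eta \in \{\pm 1\}$. This defines a map from $*$-classes to $\mathscr{S}$.

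The key steps, in order, are as follows. First I would check that the map is well defined: the chosen representative with $f_w$ increasing is unique, and the data $(a,b,\eta)$ satisfies $0 \le a < b \le n$ because $f_w$ is strictly increasing and nonconstant (the walk has length $>0$), forcing $a = s(w) < t(w) = b$, with both in $[0,n]$ since these are vertices of $Q(n)$. Second, I would prove surjectivity by constructing, for each $(a,b,\eta)\in\mathscr{S}$, an explicit alternating walk string $w$ with $s(w)=a$, $t(w)=b$ and the prescribed starting-type: reading from the source vertex $a$ up to $b$, alternate between an up-step and a down-step (using the arrows $\alpha_i, \beta_i$ and their inverses) in the pattern dictated by $\eta$, and verify that the resulting reduced walk contains no subpath of length $\ge 2$ (automatic for alternating walks by Corollary \ref{cor : types of strings}) and hence is a string. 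Third, I would prove injectivity: if two $*$-classes map to the same $(a,b,\eta)$, then their increasing representatives $w, w'$ have the same source, same target, and the same starting arrow type; since $f_w$ determines each $\gamma_k$ — consecutive values $f_w(j-1), f_w(j)$ differing by $+1$ at each step and the alternation of $Q(n)_1$ vs.\ $Q(n)_1^*$ forced by the alternating condition together with the walk condition $t(\gamma_{k+1}) = s(\gamma_k)$ — the two walks coincide.

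The main obstacle I anticipate is the bookkeeping in the surjectivity step: one must be careful that the naive "zig-zag" walk between $a$ and $b$ is genuinely reduced (no $\gamma_k = \gamma_{k+1}^*$) and genuinely avoids the relations in $I(n)$, in particular that the two possible starting types ($\eta = +1$ and $\eta = -1$) really do give distinct, valid strings rather than one of them collapsing or violating reducedness at the endpoints. Here the detailed structure of $Q(n)$ — arrows only between $i$ and $i\pm 1$, and at each vertex the precise pair of outgoing/incoming arrows — does the work, exactly as in the proof of Lemma \ref{lemma : string length 2 alternating}, so I would model the argument closely on that lemma. The remaining steps (well-definedness and injectivity) are essentially immediate consequences of Lemmas \ref{lemma : string length 2 alternating} and \ref{lemma : fw bijection on image} once the encoding of $\eta$ is fixed, so the proof should be short.
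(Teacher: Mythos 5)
Your proposal is correct and follows essentially the same route as the paper: choose the representative of each $*$-class with $f_w$ increasing, record $(s(w),t(w))$ together with one bit distinguishing the two possible alternating zig-zags, and verify surjectivity by exhibiting the explicit walk (the paper's map $\psi$). The only cosmetic difference is that you encode $\eta$ via the type of $\gamma_1$ while the paper uses $\gamma_m$; since the two are determined by one another together with the parity of $b-a$, this changes nothing.
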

\begin{proof}
    We denote by $\mathscr{X}$ the set of $*$-classes of strings on $(Q(n), I(n))$ of length $>0$. 
    We will define a bijective function  $\varphi:\mathscr{X}\rightarrow \mathscr{S}$.

    Let $w=\gamma_m\ldots \gamma_1$ be a string of length $>0$. Then $f_w$ is strictly monotone and it is increasing if and only if $f_{w^*}$ is decreasing. Since we only care about $*$-classes we can assume that $f_w$ is strictly increasing. We define 
    \[
    \varphi(w)=(f_w(0), f_w(m), \eta), \hbox{ with }\eta=\begin{cases}
        1&\hbox{ if }\gamma_1\in Q(n)_1,\\
        -1&\hbox{ if }\gamma_1\in Q(n)_1^*.
    \end{cases}
    \]
    Viceversa, if $(a,b,\eta)\in\mathscr{S}$ we set
    \[
 \psi((a,b,\eta))=\begin{cases}
     \alpha_{b-1}^*\beta_{b-2}\ldots\alpha_{a+1}^*\beta_a &\hbox{ if }\eta=1, \hbox{ and }a\equiv_2 b\\
     \beta_{b-1}\alpha_{b-2}^*\ldots\alpha_{a+1}^*\beta_a &\hbox{ if }\eta=1, \hbox{ and }a\not\equiv_2 b\\
     \beta_{b-1}\alpha_{b-2}^*\ldots\beta_{a+1}\alpha_a^* &\hbox{ if }\eta=-1, \hbox{ and }a\equiv_2 b\\
     \alpha_{b-1}^*\beta_{b-2}\ldots\beta_{a+1}\alpha_{a}^* &\hbox{ if }\eta=-1, \hbox{ and }a\not\equiv_2 b,
 \end{cases}
    \]
    where $a\equiv_2 b$, resp. $a\not\equiv_2 b$, indicates that $a$ and $b$ have, resp. do not have, the same parity.
    
  Since $t(\beta_i)=i+1=t(\alpha_{i+1})=s(\alpha_{i+1}^*)$ and $t(\alpha_j^*)=s(\alpha_j)=s(\beta_{j+1})$ then $(a,b,\eta)$ is certainly a reduced walk. It is a string since it does not contain any path of length $\geq 2$ or any monomial in $I(n)$ or summand of a binomial generating $I(n)$. Note that $s(\psi((a,b,\eta)))=a$ and $t(\psi((a,b,\eta)))=b$. At this point is an easy check to see that $\varphi$ and $\psi$ are mutual inverses.
\end{proof}

\begin{corollary}\label{cor : number of *classes of strings}
    The number of $*$-classes of strings on $(Q(n), I(n))$ is $(n+1)^2+1$.
\end{corollary}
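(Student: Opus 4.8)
The plan is to count all $*$-classes of strings on $(Q(n), I(n))$ by partitioning them according to the trichotomy established in Corollary \ref{cor : types of strings}: the trivial paths $\epsilon_i$, the two length-$2$ cycles $\beta_{n-1}\alpha_{n-1}$ and $\alpha_{n-1}^*\beta_{n-1}^*$, and the alternating walk strings. First I would note that the trivial paths $\epsilon_i$ with $i\in[0,n]$ satisfy $\epsilon_i^*=\epsilon_i$, so each forms a singleton $*$-class, giving exactly $n+1$ such classes. Next, the two length-$2$ cycles $\beta_{n-1}\alpha_{n-1}$ and $\alpha_{n-1}^*\beta_{n-1}^*$ are formal inverses of one another, so together they constitute a single $*$-class, contributing $1$ more.

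The main content is the count of $*$-classes of alternating walk strings, and here I would simply invoke Lemma \ref{ lemma : parameterisation of strings}, which gives a bijection between this set and
\[
\mathscr{S}=\left\{(a,b,\eta)\mid 0\leq a<b\leq n,\ \eta\in\{\pm1\}\right\}.
\]
So I would count $|\mathscr{S}|$: the number of pairs $(a,b)$ with $0\leq a<b\leq n$ is $\binom{n+1}{2}$, and each such pair comes with two choices of $\eta$, giving $|\mathscr{S}| = 2\binom{n+1}{2} = (n+1)n = n^2+n$.

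Adding the three contributions together yields
\[
(n+1) \;+\; 1 \;+\; (n^2+n) \;=\; n^2 + 2n + 2 \;=\; (n+1)^2 + 1,
\]
which is the claimed formula. The only subtlety worth a sentence of care is making sure the three families are genuinely disjoint as $*$-classes: the trivial paths have length $0$, the cycles have length $2$ and are (reduced) cycles rather than alternating walks in the sense of Corollary \ref{cor : types of strings}(3), and the alternating walk strings are by definition neither trivial nor equal to those two exceptional cycles, so no string is double-counted. I do not anticipate any real obstacle here — the statement is a bookkeeping corollary of Lemma \ref{ lemma : parameterisation of strings} and Corollary \ref{cor : types of strings} — the only thing to get right is the arithmetic $2\binom{n+1}{2}+(n+1)+1 = (n+1)^2+1$ and the disjointness remark.
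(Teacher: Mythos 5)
Your proof is correct and follows essentially the same route as the paper: split by the trichotomy of Corollary \ref{cor : types of strings}, then count $\mathscr{S}$ via Lemma \ref{ lemma : parameterisation of strings}. In fact your bookkeeping is slightly more careful than the paper's, which miscounts the trivial paths as $n$ rather than $n+1$ (the correct value, needed for the total to come out to $(n+1)^2+1$).
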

\begin{proof}
Corollary \ref{cor : types of strings} tells us that we have three types of strings. As for the first type of strings, the trivial paths, they are fixed by the $*$-operation and there are $n$ many of them. We have only two strings of the second type which are interchanged by the $*$-operation and this gives us another $*$-class.
    
By the previous lemma, we are hence left to compute the cardinality of $\mathscr{S}$ which coincides with twice the cardinality of the set of connected subintervals of $[0,n]$, that is $n(n+1)$.
\end{proof}

\begin{remark} \label{proposition : no bands}
It follows from Corollary \ref{cor : number of *classes of strings} that there are no bands $(Q(n), I(n))$ for any $n\geq 1$, and the
algebra $B(n)$ is representation-finite. 
\end{remark}



    Since we require that $w^h$ is a string for any $h\geq 1$, in particular $w$ itself is a string and by Lemma \ref{lemma : string length 2 alternating} $f_w$ is strictly monotone. On the other hand we require that $f_w(m)=t(\gamma_m)=s(\gamma_1)=f_w(0)$, but the strictly monotonicity of $f_w$ tells us that this is not possible.

\section{Classification of indecomposable $B(n)$-modules}\label{sec : RepTh of B(r)}
We devote this section to the classification of $B(n)$-indecomposable modules. 

We start be recalling what a representation is in this setting.

Let $Q=(Q_0,Q_1)$ be a quiver and $I$ be an admissible ideal of its path algebra $\kk Q$. A \emph{$(Q,I)$-representation over $\kk$} is given by
\begin{itemize}
    \item a collection of finite dimensional $\kk$-vector spaces $(M_i)_{i\in Q_0}$
    \item a collection of $\kk$-linear maps $(\varphi_{\gamma}: M_i\to M_j)_{\gamma:i\to j\in Q_1}$
  such that the maps verify the relations in $I$: if $\sum_i c_i(\gamma_{m_i}^{(i)}\ldots \gamma_1^{(i)})\in I$ then $\sum_i c_i({M_{\gamma_{m_i}^{(i)}}\circ \ldots \circ M_{ \gamma_1^{(i)}}})=0$
\end{itemize}
If $M=((M_i)_{i\in Q_0}, (\varphi_\gamma)_{\gamma \in Q_1})$ and $N=((N_i)_{i\in Q_0}, (\varphi'_\gamma)_{\gamma \in Q_1})$ are two $(Q,I)$-representations, then a morphism from $M$ to $N$ is a collection of $\kk$-linear maps between the vector spaces of the representations $\psi=(\psi_i:M_i\rightarrow N_i)_{i\in Q_0}$ such that for any $\gamma:i\to j\in Q_1$ it holds that $\psi_j\circ\varphi_\gamma=\varphi_\gamma'\circ\psi_i$. The category of (finite dimensional) $(Q,I)$-representations is equivalent to the category of (finite dimensional) $\kk Q/I$-modules (see, for instance, \cite[Chapter II, Theorem 3.7]{MR2197389}), therefore we will very often consider $(Q(n),I(n))$-representations and refer to them as $B(n)$-modules.

Let $Q$ be a quiver and let $I$ be an admissible ideal such that $\kk Q/I$ is special biserial. As already recalled, $I$ is generated by monomials and binomials in paths on $Q$. Assume that we have a minimal set of generators of $I$. If a path $p$ appears as a monomial (rescaled by some non zero scalar) in some binomial relation in $I$ and $p\not\in I$, then this binomial relation is unique. The following theorem provides a parameterization of $\kk Q/I$-modules for any special biserial algebra:
\begin{theorem}\label{thm : classification first version}\cite[Proposition 2.3]{WW} Let $\kk Q/I$ be a special biserial algebra.  
The indecomposable $\kk Q/I$-modules are organized into three families: one parameterized by strings on $(Q,I)$, one parameterized by bands on $(Q,I)$, and a third one parameterized by (non redundant) binomial relations in $I$. Moreover, the three families have empty intersection. 
\end{theorem}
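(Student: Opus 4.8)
The final statement is Theorem \ref{thm : classification first version}, the classification of indecomposable modules for a special biserial algebra, attributed to \cite[Proposition 2.3]{WW}. Since this is quoted as a known result with a citation, my proof proposal is really a proof sketch recalling the standard argument from the special biserial literature.

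\medskip

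The plan is to reduce the statement to the more fundamental classification for \emph{special biserial} algebras via their associated \emph{string algebra}. First I would recall that if $\kk Q/I$ is special biserial, then by \cite[Corollary of Lemma 1]{SW} the ideal $I$ is generated by monomials and binomials; quotienting out the finitely many binomial relations (equivalently, replacing each binomial $a\gamma - b\delta$ by the two monomials $\gamma$ and $\delta$) produces a \emph{string algebra} $\kk Q/I'$ with $I \subseteq I'$. The key structural observation is that $\kk Q/I$ is then a quotient of a string algebra, and more precisely that the $\kk Q/I$-modules which are \emph{not} modules over the string algebra $\kk Q/I'$ are exactly accounted for by the binomial relations: each non-redundant binomial relation $a\gamma-b\delta$ gives rise to a single indecomposable module (the uniserial-type module supported on the union of the two paths $\gamma$, $\delta$, which is simultaneously projective and injective). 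I would cite \cite{SW} and \cite{Butler-Ringel, WW} for this dichotomy.

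\medskip

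Next I would invoke the Butler--Ringel classification \cite{Butler-Ringel} for string algebras: the indecomposable modules over a string algebra $\kk Q/I'$ are precisely the string modules $M(w)$, for $w$ a string on $(Q, I')$ up to the $*$-equivalence $w \sim w^*$, and the band modules $M(w, \lambda, k)$, for $w$ a band, $\lambda \in \kk^\times$, $k \geq 1$. One then checks that strings (resp. bands) on $(Q, I')$ coincide with strings (resp. bands) on $(Q, I)$ in the sense defined in the excerpt — since a reduced walk avoids a monomial summand of a binomial relation in $I$ exactly when it avoids the corresponding monomial of $I'$ — so the first two families in the statement are identified. The third family (one module per non-redundant binomial relation) consists of exactly those indecomposables killed in passing from $\kk Q/I$ to $\kk Q/I'$; here I would recall that these are the non-uniserial projective-injective modules, which are indecomposable because their socle and top are both simple, and are pairwise non-isomorphic since distinct binomials are supported on distinct vertex pairs. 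Finally, emptiness of the intersection follows by comparing invariants: projective-injective non-uniserial modules are not string or band modules (string modules over a string algebra are never both projective and injective unless uniserial, and band modules have non-simple socle distributed differently), string modules and band modules are distinguished by whether the underlying walk is a cycle admitting the band condition, and distinct $*$-classes of strings give non-isomorphic modules by reading off the walk from the module's Loewy structure.

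\medskip

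The main obstacle — or rather the only real content beyond bookkeeping — is verifying the precise correspondence between the "extra" indecomposables of $\kk Q/I$ and the binomial relations, i.e.\ showing that \emph{every} indecomposable $\kk Q/I$-module that is not a $\kk Q/I'$-module arises from exactly one binomial, with no further modules appearing and no coincidences. This is exactly the content of \cite[Proposition 2.3]{WW} (building on \cite[Proposition 2.3]{WW} and the earlier work \cite{SW, Butler-Ringel}), so in the write-up I would simply cite \emph{loc.\ cit.}\ rather than reproduce the argument; for the purposes of this paper we only need the \emph{statement}, since Proposition \ref{proposition : no bands} will subsequently eliminate the band family entirely and leave us with an explicit finite list.
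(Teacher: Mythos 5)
The paper gives no proof of this theorem---it is quoted directly from \cite[Proposition 2.3]{WW}---and your proposal likewise ultimately defers to that citation, so the approaches coincide. Your accompanying sketch (pass to the associated string algebra by adjoining the monomial summands of the binomials, invoke the Butler--Ringel classification, and account for the remaining indecomposables as the non-uniserial projective-injectives attached to the non-redundant binomial relations) is an accurate outline of the standard argument.
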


By Remark \ref{proposition : no bands} we know that there are no bands on $(Q(n), I(n))$ and since we are interested in classifying indecomposables $B(n)$-modules we can restrict our attention only to the class of modules corresponding to strings and to binomial relations. 

Let $w=\gamma_m\ldots \gamma_1$ be a string of strictly positive length on a quiver with relations $(Q,I)$ for a special biserial algebra. We can define a function $f_w:[0,m]\rightarrow [0,n]$ in the same way as in Lemma \ref{lemma : string length 2 alternating}. If $w=\epsilon_i$ for some $i$ we define $f_w:\{0\}\to [0,m]$ as $f_w(0)=i$.

Let $\gamma\in Q_1\cup Q_1^*$. We denote by $\Tilde{\gamma}\in Q_1$ the unique element in $\{\gamma,\gamma^*\}\cap Q_1$. We will refer to such $\Tilde{\gamma}$ as an  honest arrow, since it is an element of $Q_1$.

\begin{definition}\cite[\S2]{WW}  Let $w=\gamma_m\ldots\gamma_1$ be a string on $(Q(n), I(n))$. The \emph{string representation $M(w)$ of $B(n)$} is defined as follows: 

       \[
     M(w)_j=\bigoplus_{h\in f_w^{-1}(j)} \kk
    \qquad  j\in [0,n], 
    \]
    \[
    M(w)_\delta=      
    \bigoplus_{i:\gamma_i\in\{\delta, \delta^*\}}(\kk(s(\Tilde{\gamma_i}))\stackrel{\bf 1}{\rightarrow} \kk(t(\Tilde{\gamma_i}))
    \qquad \delta\in Q(n)_1 ,
    \]
    where in the definition of the maps we denote by $\kk(h)$ the $h$-th copy of $\kk$ in the direct sum $\bigoplus_{h\in f_w^{-1}(l)} \kk$.
\end{definition}

Note that the above defined representation is indecomposable by construction. Moreover, $M(w)$ is isomorphic to $M(v)$ if and only if $w=v^*$. Thus isomorphism classes of string modules are parameterized by $*$-classes of strings on $(Q(n), I(n))$.

Since $M(w)$ is simple if and only if $w=\epsilon_i$ for some $i\in [0,n]$, we will sometimes use the notation $S_i$ for $M(\epsilon_i)$. If $w$ is a string on $(Q(n), I(n))$ of length $\geq 1$ different from $\beta_{n-1}\alpha_{n-1}, \alpha_{n-1}^* \beta_{n-1}^*$ and $(a,b,\eta)$ is the element of $\mathscr{S}$ corresponding to the $*$-class of $w$, we will often write $M(a,b,\eta)$ instead of $M(w)$.

Next we need to give the definition of the indecomposable modules corresponding to binomial relations in $I(n)$. Recall that the (non redundant) binomial relations in $I(n)$ are \[ \beta_i\alpha_i-\alpha_{i+1}\beta_{i+1}, \qquad i\in [0,n-2]
\]
and are therefore in bijection with the set $[0,n-2]$.
\begin{definition}Let $n\geq 2$ and let $i\in[0,n-2]$. The \emph{(non uniserial) projective-injective $B(n)$-module $R(i)$} are:
    \[
    R(i)_j=\begin{cases}
        \kk&\hbox{ if }j\in\{i,i+2\},\\
        \kk\oplus\kk&\hbox{ if }j=i+1,\\
        (0)&\hbox{ otherwise}, 
    \end{cases}\qquad j\in[0,n],
    \]
     \[
    R(i)_\gamma=\begin{cases}
        \left[
        \begin{array}{cc}
             1 &0
        \end{array}
        \right]&\hbox{ if }\gamma\in\{\alpha_i,\beta_{i+1}\},\\
        \left[
        \begin{array}{c}
             0 \\
             1 
        \end{array}
        \right]&\hbox{ if }\gamma\in\{\beta_{i}, \alpha_{i+1}\},\\
        (0)&\hbox{ otherwise}. 
    \end{cases}\qquad \gamma\in Q(n)_1,
    \]
\end{definition}

Recall that by Remark \ref{proposition : no bands} we know that there are no band modules. A more precise version of Theorem \ref{thm : classification first version} for our algebra $B(n)$ is hence the following:
\begin{theorem}\label{thm:classification of indcps} Let $M$ be an indecomposable $B(n)$-module. Then either there exists an $i\in[0,n-2]$ such that $M\simeq R(i)$ or there exists a string $w$ on $(Q(n), I(n))$ such that $M\simeq  M(w)$.
\end{theorem}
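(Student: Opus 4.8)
The plan is to deduce Theorem \ref{thm:classification of indcps} directly from the general classification in Theorem \ref{thm : classification first version} by removing the two features of the general statement that do not arise for $B(n)$: band modules and the indexing of the binomial family. First I would invoke Theorem \ref{thm : classification first version}: every indecomposable $B(n)$-module belongs to exactly one of the three families — string modules, band modules, and modules attached to non-redundant binomial relations in $I(n)$. By Proposition \ref{proposition : no bands} there are no bands on $(Q(n), I(n))$, so the band family is empty and every indecomposable $M$ is either a string module or a module coming from a binomial relation.

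Next I would identify the binomial family explicitly. As already recalled in the excerpt, the non-redundant binomial relations in $I(n)$ are precisely $\beta_i\alpha_i - \alpha_{i+1}\beta_{i+1}$ for $i\in[0,n-2]$, so the binomial family is indexed by $[0,n-2]$; I would then check that the module attached (in the sense of Theorem \ref{thm : classification first version}, i.e.\ \cite{WW}) to the relation $\beta_i\alpha_i - \alpha_{i+1}\beta_{i+1}$ is exactly the module $R(i)$ defined above. This is the one genuinely computational point: one must verify that the dimension vector and the structure maps prescribed by the Wald–Waschbüsch recipe for a binomial relation $p - q$ (with $p = \beta_i\alpha_i$, $q = \alpha_{i+1}\beta_{i+1}$, both paths from vertex $i$ to vertex $i+2$ through vertex $i+1$) agree with the $4$-dimensional representation supported on vertices $i, i+1, i+2$ with $R(i)_{i+1}=\kk^2$ and the displayed matrices for $\alpha_i,\beta_i,\alpha_{i+1},\beta_{i+1}$; one also checks these matrices satisfy all the relations of $I(n)$ (the monomial relations vanish for support reasons, and the binomial relation $\beta_i\alpha_i = \alpha_{i+1}\beta_{i+1}$ holds because both compositions equal $\left[\begin{smallmatrix}0&0\\0&0\end{smallmatrix}\right]$ — wait, rather both equal the appropriate rank-one map; this is the routine check). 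Note that for $n\le 1$ the set $[0,n-2]$ is empty, so no $R(i)$ occurs and the statement reduces to: every indecomposable is a string module, consistent with the phrasing.

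Having matched the two families, the conclusion is immediate: an indecomposable $B(n)$-module $M$ is either isomorphic to $R(i)$ for some $i\in[0,n-2]$ (hence in particular for some $i\in[0,n-1]$, as in the statement), or isomorphic to $M(w)$ for some string $w$ on $(Q(n), I(n))$. Finally, I would remark on the finiteness consequence, which is really the point of the section: the string modules are in bijection with $*$-classes of strings, of which there are $(n+1)^2+1$ by Corollary \ref{cor : number of *classes of strings}, and there are $n-1$ modules $R(i)$; since both counts are finite, $B(n)$ is of finite representation type.

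The main obstacle is precisely the identification step: one has to unwind the definition of the module associated to a binomial relation in the Wald–Waschbüsch classification and match it against the explicit $R(i)$, which requires care with conventions (orientation of arrows, which composite is which) but involves no real difficulty once the conventions are pinned down. Everything else is a formal bookkeeping of the general theorem plus a citation of Proposition \ref{proposition : no bands}.
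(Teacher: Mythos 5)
Your proposal is correct and follows exactly the route the paper takes (indeed, the paper offers no separate proof, presenting the theorem as an immediate consequence of Theorem \ref{thm : classification first version}, Proposition \ref{proposition : no bands}, and the identification of the binomial-relation modules with the $R(i)$). Your added care in matching the Wald--Waschb\"usch module for $\beta_i\alpha_i-\alpha_{i+1}\beta_{i+1}$ with $R(i)$, and your observation that the index range should be $[0,n-2]$ rather than the $[0,n-1]$ appearing in the statement, are both sound.
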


\begin{example}\label{exple:indcps for r=1}
    In the case $n=1$, the indecomposable $B(1)$-modules are all string modules:

\begin{equation*}
\begin{tikzpicture}
\node (C) at (-1.6,0) {$S_0=$};
\node (A) at (-1,0) {$0$};
\node (B) at (0.4,0) {$\C$,};
\path[->,font=\scriptsize,>=angle 90]
(A) edge [bend left] node[above] {0} (B);
\path[->,font=\scriptsize,>=angle 90]
(B) edge [bend left] node[below] {0} (A);
\end{tikzpicture}
\qquad
\begin{tikzpicture}
\node (C) at (-1.6,0) {$S_1=$};
\node (A) at (-1,0) {$\C$};
\node (B) at (0.4,0) {$0$,};
\path[->,font=\scriptsize,>=angle 90]
(A) edge [bend left] node[above] {0} (B);
\path[->,font=\scriptsize,>=angle 90]
(B) edge [bend left] node[below] {0} (A);
\end{tikzpicture}
\end{equation*}
\begin{equation*}
\begin{tikzpicture}
\node (C) at (-2,0) {$M(\alpha_0)=$};
\node (A) at (-1,0) {$\C$};
\node (B) at (0.4,0) {$\C$,};
\path[->,font=\scriptsize,>=angle 90]
(A) edge [bend left] node[above] {1} (B);
\path[->,font=\scriptsize,>=angle 90]
(B) edge [bend left] node[below] {0} (A);
\end{tikzpicture}
\quad
\begin{tikzpicture}
\node (C) at (-2,0) {$M(\beta_0)=$};
\node (A) at (-1,0) {$\C$};
\node (B) at (0.4,0) {$\C$,};
\path[->,font=\scriptsize,>=angle 90]
(A) edge [bend left] node[above] {0} (B);
\path[->,font=\scriptsize,>=angle 90]
(B) edge [bend left] node[below] {1} (A);
\end{tikzpicture}
\quad
\begin{tikzpicture}
\node (C) at (-2.3,0) {$M(\beta_0\alpha_0)=$};
\node (A) at (-1,0) {$\C^2$};
\node (B) at (0.4,0) {$\C$,};
\path[->,font=\scriptsize,>=angle 90]
(A) edge [bend left] node[above] {B} (B);
\path[->,font=\scriptsize,>=angle 90]
(B) edge [bend left] node[below] {A} (A);
\end{tikzpicture}
\end{equation*}
where $\mathrm{A}=\left[\begin{array}{c}
          1\\ 0\end{array}\right]$ and $\mathrm{B}=\left[\begin{array}{cc}
          0 & 1\end{array}\right]$. 
\end{example}

\begin{corollary}
    The number of isomorphism classes of indecomposable $B(n)$-modules is $n+(n+1)^2$. 
\end{corollary}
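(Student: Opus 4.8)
The plan is to count the indecomposables by adding up the three (now two, since there are no bands) families from Theorem \ref{thm:classification of indcps}, using the bijections and counts already established in Section \S\ref{sec : string combinatorics}.

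\begin{proof}
By Theorem \ref{thm:classification of indcps} together with Proposition \ref{proposition : no bands}, every indecomposable $B(n)$-module is isomorphic either to one of the projective-injective modules $R(i)$ with $i\in[0,n-2]$, or to a string module $M(w)$ for some string $w$ on $(Q(n), I(n))$; moreover, by the last sentence of Theorem \ref{thm : classification first version} these two families are disjoint (no string module is isomorphic to some $R(i)$). The modules $R(i)$ are pairwise non-isomorphic (they have pairwise distinct dimension vectors), so they contribute exactly $|[0,n-2]| = n-1$ isomorphism classes. For the string modules, recall that $M(w)\simeq M(v)$ if and only if $v=w^*$, so isomorphism classes of string modules are in bijection with $*$-classes of strings on $(Q(n), I(n))$. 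By Corollary \ref{cor : number of *classes of strings} there are $(n+1)^2+1$ such $*$-classes. Adding the two contributions gives $(n-1) + (n+1)^2 + 1 = n + (n+1)^2$ isomorphism classes of indecomposable $B(n)$-modules. Since this number is finite, $B(n)$ is representation finite.
\end{proof}

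The only mild subtlety worth double-checking is the edge case $n=1$: then $[0,n-2]=[0,-1]=\emptyset$, so there are no $R(i)$'s, and the count reduces to $(n+1)^2+1 = 5$, matching the five string modules $S_0, S_1, M(\alpha_0), M(\beta_0), M(\beta_0\alpha_0)$ displayed in Example \ref{exple:indcps for r=1}; one also notes that for $n=1$ the two ``type (2)'' strings $\beta_0\alpha_0$ and $\alpha_0^*\beta_0^*$ do form a genuine $*$-class distinct from the $n$ trivial ones, consistently with the ``$+1$'' in Corollary \ref{cor : number of *classes of strings}. No step is really an obstacle here; the statement is a bookkeeping corollary and the main thing is to invoke the disjointness of the families and the count of $*$-classes correctly rather than, say, double-counting $w$ and $w^*$.
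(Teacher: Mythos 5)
Your proof is correct and follows essentially the same route as the paper's: count the $(n+1)^2+1$ isomorphism classes of string modules via the $*$-classes from Corollary \ref{cor : number of *classes of strings}, count the $n-1$ pairwise non-isomorphic modules $R(i)$ for $i\in[0,n-2]$, and add. Your extra checks (disjointness of the two families and the $n=1$ edge case) are fine but not needed beyond what the paper already records.
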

\begin{proof} Corollary \ref{cor : number of *classes of strings} tells us that there are exactly $(n+1)^2+1$ strings on $(Q(n), I(n))$ up to $*$-equivalence, which gives us $(n+1)^2+1$ indecomposable string $B(n)$-modules, up to isomorphism. If an indecomposable module is not a string, then  by the previous theorem it is isomorphic to $R(i)$ for some $i\in[0,n-2]$. It is immediate to see that $R(i)\simeq R(j)$ if and only if $i=j$. Therefore the number of indecomposable $B(n)$ modules which are not string is $\#[0,n-2]=n-1$.
\end{proof}

\subsection{Thin indecomposable $B(n)$-modules}In this section we recall the notion of thin module for a quiver with relations and we show that an  object $N$ is subobject of a non thin indecomposable $B(n)$-module $M$ if and only if there is a quotient of $M$ with same dimension vector as $N$. 

\begin{definition}Let $(Q,I)$ be a quiver with relations. We say that a module $M$ is \emph{thin} if the isomorphism classes of its decomposition factors are all distinct. 
\end{definition}
In our case, since $B(n)$ is basic, a module is thin if and only if its dimension vector has all entries $\leq 1$. Moreover, it is clear that if a module is indecomposable, then its support is connected.

\begin{lemma}\label{lemma : description_of_rooty}
    Let $M$ be a thin indecomposable $B(n)$-module and let $\underline{d}=(d_n, d_{n-1}, \ldots, d_0)$ be its dimension vector. Then there exist $0\leq i\leq j\leq n$ such that $d_k=1$ for any $k\in[i,j]$ and $d_k=0$ otherwise.
\end{lemma}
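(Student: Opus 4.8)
The plan is to leverage the classification of indecomposable $B(n)$-modules from Theorem \ref{thm:classification of indcps} together with the explicit combinatorial descriptions available for each family. A thin indecomposable module $M$ cannot be one of the $R(i)$, since by definition $R(i)$ has $R(i)_{i+1}=\kk\oplus\kk$, so its dimension vector has an entry equal to $2$ and $R(i)$ is not thin. Hence $M\simeq M(w)$ for some string $w$ on $(Q(n), I(n))$. The dimension vector of a string module is computed from the fibres of the function $f_w$, namely $\dim M(w)_j=\#f_w^{-1}(j)$, so I need to understand which strings $w$ give $f_w$ with all fibres of cardinality $\leq 1$, and among those, which ones have image an interval.

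First I would dispose of the trivial and short cycle cases: if $w=\epsilon_i$ then $\underline d$ is the $i$-th standard basis vector, which is of the claimed form with $i=j$; if $w\in\{\beta_{n-1}\alpha_{n-1},\alpha_{n-1}^*\beta_{n-1}^*\}$ then $f_w$ takes the value $n-1$ twice (the source of $\alpha_{n-1}$ and the target of $\alpha_{n-1}$, i.e. both endpoints of the walk land appropriately), so $\dim M(w)_{n-1}=2$ and $M(w)$ is not thin — this case is excluded. By Corollary \ref{cor : types of strings} the only remaining possibility is that $w$ is an alternating walk of positive length. Passing to a representative of the $*$-class for which $f_w$ is increasing (using Lemma \ref{lemma : string length 2 alternating}), Lemma \ref{lemma : fw bijection on image} tells us that $f_w\colon[0,m]\to[s(w),t(w)]$ is a bijection onto the interval $[s(w),t(w)]$. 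Therefore each fibre $f_w^{-1}(k)$ has exactly one element when $k\in[s(w),t(w)]$ and is empty otherwise, so the dimension vector is $d_k=1$ for $k\in[s(w),t(w)]$ and $d_k=0$ elsewhere. Setting $i=s(w)$ and $j=t(w)$ gives the claim.

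The main obstacle — really the only subtle point — is making sure the short-cycle strings $\beta_{n-1}\alpha_{n-1}$ and $\alpha_{n-1}^*\beta_{n-1}^*$ are genuinely non-thin, so that they do not sneak into the statement; this is an immediate check on the definition of $M(w)$ once one observes that such a walk visits vertex $n-1$ at two distinct positions of its underlying word (positions $0$ and $2$ in $f_w$), forcing a two-dimensional space there. Everything else is bookkeeping with $f_w$ and an appeal to Lemmas \ref{lemma : string length 2 alternating} and \ref{lemma : fw bijection on image}. One could phrase the whole argument slightly more directly by noting that a string module $M(w)$ is thin precisely when no vertex is repeated in the walk $w$, and on $(Q(n),I(n))$ the only strings repeating a vertex are the two short cycles; but routing through Lemma \ref{lemma : fw bijection on image} is cleaner since it simultaneously yields the "interval" shape of the support.
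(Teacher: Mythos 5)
Your argument is correct and follows essentially the same route as the paper's proof: reduce via the classification theorem to string modules, dispose of the trivial paths and of the two cycle strings, and use Lemma \ref{lemma : fw bijection on image} to get the interval-shaped support for alternating strings. One small slip worth correcting: for $w=\beta_{n-1}\alpha_{n-1}$ the walk visits $n$, then $n-1$, then $n$ again (since $\alpha_{n-1}\colon n\to n-1$ and $\beta_{n-1}\colon n-1\to n$), so it is $\#f_w^{-1}(n)=2$ and $\dim M(w)_n=2$, not $\dim M(w)_{n-1}=2$ as you write (your parenthetical ``the source of $\alpha_{n-1}$ and the target of $\alpha_{n-1}$'' names two \emph{different} vertices); the conclusion that $M(w)$ is not thin is unaffected.
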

    

\begin{remark}
    Note that there is a unique string module which is not thin: the one corresponding to the walk $\beta_{n-1}\alpha_{n-1}$.
\end{remark}
\begin{corollary}\label{cor : number of bricks}
    The number of isomorphism classes of thin indecomposable $B(n)$-modules is $(n+1)^2$.
\end{corollary}

The following result will have as an application that we do not need non thin modules to determine the wall and chamber structure of $B(n)$.

\begin{lemma}\label{lemma : subobjs and quots of non thin }
    Let $M$ be an indecomposable non thin $B(n)$-module. Then $M$ is isomorphic to either $M(\beta_{n-1}\alpha_{n-1})$ or $R(i)$ for some $i\in [0,n-2]$. Moreover, 
    for any indecomposable subobject $N$ of $M$  there exists a quotient $R$ of $M$ such that $\dim N=\dim R$.
\end{lemma}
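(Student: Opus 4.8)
The plan is to prove the statement by explicitly classifying the indecomposable non-thin modules and then, for each one, listing its indecomposable subobjects together with a matching quotient. The first assertion is immediate: by Theorem~\ref{thm:classification of indcps} every indecomposable $B(n)$-module is either some $R(i)$ or a string module $M(w)$, and by Lemma~\ref{lemma : description_of_rooty} (and the subsequent remark) the only non-thin string module is $M(\beta_{n-1}\alpha_{n-1})$; every $R(i)$ is non-thin since $\dim R(i)_{i+1}=2$. So it remains to treat the two families $M(\beta_{n-1}\alpha_{n-1})$ and $R(i)$, $i\in[0,n-2]$.

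For $R(i)$, I would first compute its submodule lattice directly from the definition of the maps. Since $R(i)$ is supported on the three vertices $i,i+1,i+2$ with the two-dimensional space at $i+1$ and the arrow actions as displayed, its indecomposable submodules are easily enumerated: the two simple submodules $S_i$ (sitting inside $R(i)_{i+1}$ via $\beta_i$, so actually $S_{i+1}$—I need to recheck which simples embed) and $S_{i+1}$, together with the length-two string submodules $M(\alpha_i)$ and $M(\alpha_{i+1})$ (the "top halves" generated by $R(i)_i$ and $R(i)_{i+2}$ respectively), and possibly $M(\alpha_{i+1}^*\beta_i)$-type walks. Dually, the indecomposable quotients of $R(i)$ are the reflections of these through the injective-projective symmetry of $R(i)$. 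Because $R(i)$ is self-dual up to the natural duality $\dual$ exchanging $\alpha$'s and $\beta$'s (more precisely $\dual R(i)\simeq R(i)$ since it is projective-injective with simple socle... actually its socle is $S_{i+1}$ and top is $S_{i+1}$ too, or $S_i\oplus S_{i+2}$—I must be careful here), the list of dimension vectors of indecomposable subobjects coincides with the list of dimension vectors of indecomposable quotients. Concretely, for each of the finitely many indecomposable $N\hookrightarrow R(i)$ one exhibits the quotient $R(i)\twoheadrightarrow R(i)/N'$ for a suitably chosen complementary submodule $N'$ with $\dim(R(i)/N')=\dim N$; since $\dim R(i)$ is symmetric under $k\mapsto 2i+2-k$, this is a matter of pairing up the short exact sequences.

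For $M(\beta_{n-1}\alpha_{n-1})$, whose dimension vector has a $2$ at vertex $n$ and $1$ at vertex $n-1$ and $0$ elsewhere, the indecomposable submodules are again few: the simple $S_n$ (embedded in $M(\beta_{n-1}\alpha_{n-1})_n$), and the string submodule $M(\beta_{n-1})=M(n-1,n,?)$ of dimension $(1,1,0,\dots)$ at vertices $n,n-1$; one then checks these are all (any indecomposable submodule must be a string module by the classification together with the fact that no $R(i)$ fits inside). Correspondingly $S_n$ is a quotient (kill the submodule $M(\beta_{n-1})$ together with a complementary copy of $\kk$ at vertex $n$... here I must verify such a submodule exists, which follows since $\beta_{n-1}\alpha_{n-1}$ acts as the identity onto one coordinate and zero onto the other, so one coordinate line at vertex $n$ together with vertex $n-1$ forms a submodule isomorphic to $M(\beta_{n-1})$, and the other coordinate line at vertex $n$ is itself a submodule $\simeq S_n$); and $M(\beta_{n-1})$ itself is a quotient by killing the other $S_n$. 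So every indecomposable subobject's dimension vector is realized by a quotient.

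The main obstacle I anticipate is purely bookkeeping: getting the embeddings and the arrow-action matrices exactly right so that the claimed submodules and quotients genuinely respect the relations $I(n)$ (in particular the binomial relation at $R(i)$), and checking that the enumeration of indecomposable submodules is complete. The conceptual content is light—it is the self-duality of the dimension vectors of $M(\beta_{n-1}\alpha_{n-1})$ and of each $R(i)$ under the palindrome symmetry of their supports—but I would want to either invoke the projective-injectivity of $R(i)$ and the standard duality on $B(n)$-modules to get the submodule/quotient correspondence cheaply, or else just write out the (very short) Hasse diagrams of the submodule lattices by hand.
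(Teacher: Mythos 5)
Your proposal follows essentially the same route as the paper: reduce to the two families via the classification theorem and Lemma~\ref{lemma : description_of_rooty}, then enumerate the proper submodules and quotients by direct inspection and pair them up by dimension vector. That strategy is correct and suffices, since the lemma only asks for a quotient with the \emph{same dimension vector}, not the same isomorphism type. However, two of the bookkeeping points you flagged as needing a recheck do come out wrong as written. First, for $M(\beta_{n-1}\alpha_{n-1})$ the ``other coordinate line'' at vertex $n$ (the one on which $\beta_{n-1}\alpha_{n-1}$ acts as zero, i.e.\ the generator) is \emph{not} a submodule: $\alpha_{n-1}$ maps it isomorphically onto the one-dimensional space at vertex $n-1$, so it generates the whole module. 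The only proper nonzero submodules are $S_n$ (the image of $\beta_{n-1}$) and $M(\beta_{n-1})$; the proper quotients are $S_n$ and $M(\alpha_{n-1})$ (not $M(\beta_{n-1})$), and the lemma holds because $\dim M(\alpha_{n-1})=\dim M(\beta_{n-1})$. Second, for $R(i)$ the ``top half generated by $R(i)_i$'' is $M(\beta_i)$, not $M(\alpha_i)$: the correct lists are submodules $S_{i+1}$, $M(\beta_i)$, $M(\alpha_{i+1})$, $M(\alpha_{i+1}^*\beta_i)$ versus quotients $S_{i+1}$, $M(\alpha_i)$, $M(\beta_{i+1})$, $M(\beta_{i+1}\alpha_i^*)$, which again match in dimension vectors. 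Your suggested shortcut via the duality twisted by the anti-automorphism swapping $\alpha_j\leftrightarrow\beta_j$ (which preserves $I(n)$ and fixes each of $M(\beta_{n-1}\alpha_{n-1})$ and $R(i)$ up to isomorphism while exchanging submodules and quotients and preserving dimension vectors) is a legitimate way to avoid the second half of each enumeration; the paper does not use it and simply lists both sides.
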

\begin{proof} Since $M$ is an indecomposable $B(n)$-module by Theorem \ref{thm:classification of indcps} $M$ is either a string module or is coming from a binomial relation in $I(n)$. 

Assume first that $M$ is a string module. By the proof of Lemma \ref{lemma : description_of_rooty} we know that all string modules but $M(\beta_{n-1}\alpha_{n-1})$ are thin.  Since $M$ is assumed not to be thin it must be  isomorphic to $M(\beta_{n-1}\alpha_{n-1})$. 
Via direct inspection we see that the only two proper submodules of $M(\beta_{n-1}\alpha_{n-1})$ are (isomorphic to) $S_{n}$ and $M(\beta_{n-1})$. On the other hand, its (proper) quotients are isomorphic to $S_n$ and $M(\alpha_{n-1})$. Since $\dim M(\beta_{n-1})=\dim M(\alpha_{n-1})$ the lemma is proven in the case $M\simeq M(\beta_{n-1}\alpha_{n-1})$.

We can hence assume that $M$ is not a string, so that $M\simeq R(i)$ for some $i\in[0,n-2]$. In this case by definition of $R(i)$ we have a non thin module. 
By direct inspection we see that  its proper subobjects are isomorphic to  $S_{i+1}$, $M(\alpha_{i+1})$, $M(\beta_i)$, $M(\alpha_{i+1}^*\beta_i)$, while its proper quotients are isomorphic to $S_{i+1},$ $M(\alpha_i),$ $M(\beta_{i+1}),$ $M(\beta_{i+1}\alpha_i^*)$.
As $\dim M(\alpha_j)=M(\beta_j)$ for any $j\in[0,n-1]$ and $\dim M(\beta_{i+1}\alpha_i^*)=\dim M(\alpha_{i+1}^*\beta_i)$ the claim follows.
\end{proof}

\begin{definition} Let $(Q,I)$ be a quiver with relations. We say that a module $M$ is a brick if its endomorphism ring is isomorphic to $\kk$.
\end{definition}

\begin{proposition}\label{prop:  brick IFF thin incecomposable}
    A $B(n)$-module is a brick if and only if it is a thin indecomposable module.
\end{proposition}
\begin{proof}
    It is well known that a thin indecomposable module is necessarily a brick. 
    
    Viceversa, assume that $M$ is a brick. Certainly, $M$ has to be indecomposable. Assume that $M$ is not thin, then by  the proof of Lemma \ref{lemma : subobjs and quots of non thin } we know that there exists an $i\in [0,n-2]$ such that $S_{i+1}$ is both a subobject and a quotient of $M$. If we denote by $f:S_{i+1}\rightarrow M$ the inclusion, and by $g:M\rightarrow S_{i+1}$ the projection to the quotient, then the composite $f\circ g$ is clearly not a multiple of the identity morphism $\textrm{Id}_M$. This contradicts the fact that $\textrm{End}(M)\simeq \kk$. 
   
\end{proof}

\section{Wall and chamber structure}\label{sec:WCstructure}
We recall here the definition of wall and chamber structure, following \cite[\S1.2, \S1.3]{WC_structure}. Since we only deal with our very special quiver with relations $(Q(n),I(n))$, we give the various  definitions adapted to our case and do not discuss the more general setting.

Firstly, we denote by $\langle\cdot, \cdot\rangle$ the usual scalar product on $\mathbb{R}^{n+1}$. We start by recalling King's notion of $v$-semistable modules for an algebra with relations \cite{king}.

\begin{definition}\label{defn : semistability} Let $v\in \mathbb{R}^{n+1}$. 
We say that a $(Q(n), I(n))$-module $M$ is \emph{$v$-semistable}  if $\langle v, \dim M\rangle=0$ and $\langle v, \dim N \rangle\leq 0$ for any non zero proper subobject $N$ of $M$. Equivalently, we say that $M$ is \emph{$v$-semistable} if $\langle v, \dim M\rangle=0$ and $\langle v, \dim R \rangle\geq 0$ for any  non zero proper quotient $R$ of $M$.
\end{definition}

\begin{remark}\label{rmk : stability conds indecp submods}Let $M$ be a non zero $(Q(n), I(n))$-module and let $v\in \mathbb{R}^{n+1}$.
    Observe that if $N$, $N'$ are subobjects of $M$ such that $N\cap N'=(0)$,  $\langle v, \dim N\rangle\leq 0$ and $\langle v, \dim N'\rangle\leq 0$ then $N\oplus N'$ is a subobject of $M$ and automatically $\langle v, N\oplus N'\rangle \leq 0$. Therefore it is enough to consider indecomposable subobjects in Definition \ref{defn : semistability}.
\end{remark}

\begin{definition}(\cite[Definition 11]{WC_structure})\begin{enumerate}
    \item Let $M$ be a non zero $B(n)$-module. Its \emph{stability space} is
    \[\mathcal{D}(M)=\{v\in\mathbb{R}^{n+1}\mid \ M \hbox{ is $v$-semistable}\}.\]
    \item We say that the stability space $\mathcal{D}(M)$ of a non-zero module is a \emph{wall} if it has codimension 1.
    \item The \emph{chambers} are open connected components of \[\mathbb{R}^{n+1}\setminus \overline{\bigcup \mathcal{D}(M)}, \]
    where the union runs over the set of indecomposable $B(n)$-modules.
    \item The \emph{wall and chamber structure for $B(n)$} is the combination of all the walls ($\mathcal{D}(M)$ for indecomposable modules $M$) and all the chambers.
\end{enumerate}
    \end{definition}
   
    \begin{remark}
    In \cite{WC_structure} to define the chambers the complement is taken with respect to the closure of the union of all non-zero modules, but then \cite[Proposition 12]{WC_structure} shows that it is enough to consider the indecomposable ones. 
    \end{remark}

    \begin{example}\label{exple : WC r=1} In the case $n=1$ we have five indecomposable. We keep the same notation as in Example \ref{exple:indcps for r=1} and describe the wall and chamber structure of $B(1)$. Determining stability space of simple modules is immediate, as by definition they do not have any proper submodule:
    \[
    \mathcal{D}(S_0)=\{v\in \mathbb{R}^2\mid \langle v, (0,1)\rangle=0\}=\hbox{$x$ axis},
    \]
    \[
    \mathcal{D}(S_1)=\{v\in \mathbb{R}^2\mid \langle v, (1,0)\rangle=0\}=\hbox{$y$ axis}.
    \]
    We are left with other three indecomposable modules: $M(\alpha_0)$ and $M(\beta_0)$ (dual to each other) and the (injective-projective) indecomposable object $M(\beta_0\alpha_0)$. Observe that the only proper submodule of $M(\alpha_0)$ is $S_0$, which is also the unique proper quotient of $M(\beta_0)$, so that
    \[
\mathcal{D}(M(\alpha_0))=\left\{v\in \mathbb{R}^2\ \middle\vert 
\begin{array}{c}
   \langle v, (1,1)\rangle=0,  \\
    \langle v, (0,1)\rangle\leq  0
\end{array}
\right\}=\left\{(t,-t)\mid t\in \mathbb{R}_{\geq 0}
\right\},
    \]
     \[
\mathcal{D}(M(\beta_0))=\left\{v\in \mathbb{R}^2\ \middle\vert
\begin{array}{c}
   \langle v, (1,1)\rangle=0,  \\
    \langle v, (0,1)\rangle\geq  0
\end{array}
\right\}=\left\{(t,-t)\mid t\in \mathbb{R}_{\leq 0}
\right\}.
    \]
    Finally, $M(\beta_0\alpha_0)$ has  only two proper subobjects, which are  $M(\beta_0)$ and $S_0$ ad hence its stability space is
\[
\mathcal{D}(M(\beta_0\alpha_0))= \left\{v\in \mathbb{R}^2\ \middle\vert
\begin{array}{c}
   \langle v, (2,1))\rangle=0, \\ \langle v, (1,1)\rangle\leq 0,  \\
    \langle v, (0,1)\rangle\leq  0
\end{array}
\right\}=\left\{(0,0)
\right\}.
\]
The wall and chamber structure in this case is represented in Figure \ref{fig:WCB1}. If we compare our picture with \cite[Figure 2]{MR2739061} we notice that the wall and chamber structure for $B(1)$ encodes the whole wall crossing phenomena in Bridgeland's stability space of the bounded derived category of constructible sheaves on $\mathbb{P}^1$.
\begin{figure}[ht!]
\centering
\begin{tikzpicture}
  \draw[gray, thick] (-3, 0) -- (3, 0) 
  node[right] 
  {\textcolor{gray}{$\mathcal{D}(S_0)$}}
  ;
  \draw[blue, thick] (0, -3) -- (0, 3) 
   node[above] 
  {\textcolor{blue}{$\mathcal{D}(S_1)$}}
  ;
  \draw[orange, thick] (0,0) -- (3,-3) 
  node[right] 
  {\textcolor{orange}{$\mathcal{D}(M(\alpha_0))$}}
  ;
  \draw[purple, thick] (0,0) -- (-3,3) 
  node[left] 
  {\textcolor{purple}{$\mathcal{D}(M(\beta_0))$}}
  ;
  \filldraw[black] (0,0) circle (2pt); 
  \node[-] at (1.2,0.4){$\mathcal{D}(M(\beta_0\alpha_0))$};
\end{tikzpicture}
\caption{Wall and chamber structure of $B(1)$}\label{fig:WCB1}
\end{figure}
    \end{example}

\begin{lemma}\label{lemma : stab space proper subobj and proper quot}
    Let $M$ be a $(Q(n), I(n))$-module and assume that there exists a proper nonzero submodule $N$ and a proper nonzero quotient $R$ such that $\dim N=\dim R$. Thus, $\mathcal{D}(M)\subseteq\mathcal{D}(N)$. 
\end{lemma}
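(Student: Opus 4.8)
The statement to prove is Lemma \ref{lemma : stab space proper subobj and proper quot}: if $M$ has a proper nonzero submodule $N$ and a proper nonzero quotient $Q$ with $\dim N = \dim Q$, then $\mathcal{D}(M) \subseteq \mathcal{D}(N)$. The plan is to pick an arbitrary $v \in \mathcal{D}(M)$ and show $v \in \mathcal{D}(N)$, which by Definition \ref{defn : semistability} means verifying (i) $\langle v, \dim N\rangle = 0$, and (ii) $\langle v, \dim N'\rangle \leq 0$ for every nonzero proper submodule $N'$ of $N$.

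The key observation is that for part (i), since $v \in \mathcal{D}(M)$ we have $\langle v, \dim M\rangle = 0$; applying the submodule condition to $N$ gives $\langle v, \dim N\rangle \leq 0$, while applying the quotient condition to $Q$ gives $\langle v, \dim Q\rangle \geq 0$. Since $\dim N = \dim Q$, these force $\langle v, \dim N\rangle = \langle v, \dim Q\rangle = 0$. This takes care of the first semistability condition for $N$. For part (ii), let $N'$ be any nonzero proper submodule of $N$. Then $N'$ is also a submodule of $M$ (submodule of a submodule), and it is a proper one since $N' \subsetneq N \subsetneq M$; moreover $N' \neq 0$. So the definition of $v$-semistability of $M$ directly yields $\langle v, \dim N'\rangle \leq 0$, which is exactly condition (ii) for $N$. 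Hence $N$ is $v$-semistable, i.e. $v \in \mathcal{D}(N)$.

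Since $v$ was arbitrary, this shows $\mathcal{D}(M) \subseteq \mathcal{D}(N)$, completing the proof. I expect no real obstacle here: the argument is essentially a direct unwinding of the definitions, the only mildly delicate point being the use of the equivalent ``quotient'' formulation of $v$-semistability in Definition \ref{defn : semistability} to extract the reverse inequality $\langle v, \dim Q\rangle \geq 0$, which combined with $\dim N = \dim Q$ pins down the value $0$. One should perhaps remark that the chain of containments $0 \neq N' \subsetneq N \subsetneq M$ genuinely makes $N'$ a \emph{proper nonzero} submodule of $M$, so Definition \ref{defn : semistability} applies to it verbatim.
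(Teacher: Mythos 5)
Your argument is correct and is essentially identical to the paper's proof: both pin down $\langle v,\dim N\rangle=0$ by combining the submodule inequality for $N$ with the quotient inequality for $Q$ via $\dim N=\dim Q$, and then observe that submodules of $N$ are submodules of $M$. Your write-up just spells out the details slightly more explicitly.
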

\begin{proof}
    Recall that 
    \begin{align*}
\mathcal{D}(M)&=\left\{v\in\mathbb{R}^{n+1}\middle\vert \begin{array}{c}\langle v,\dim M\rangle=0, \\
\langle v,
\dim N'\rangle\leq 0
\hbox{ for any subobj } N'\subset M\end{array}
\right\}\\
&=\left\{v\in\mathbb{R}^{n+1}\middle\vert \begin{array}{c}\langle v,\dim M\rangle=0, \\
\langle v,
\dim R'\rangle\geq 0
\hbox{ for any quotient } M\twoheadrightarrow R'.  \end{array}
\right\}.
    \end{align*}

Let $v\in \mathcal{D}(M)$. Since $\dim N=\dim R$,  then it must hold $\langle v, \dim N\rangle=0$. Moreover, N being a suboject of $M$, every subobject of $N$ is also a subobject of $M$, and hence $\langle v, \dim N'\rangle\leq 0$ for any subobject $N'\subseteq N$. Thus we conclude that $\mathcal{D}(M)\subseteq\mathcal{D}(N)$.    
\end{proof}

The following lemma can be also found in \cite{asai}.

\begin{lemma}\label{lemma : stab spaces of non thin do not give new walls}
    Let $M$ be an indecomposable $B(n)$-module. Then there exists a thin  indecomposable  $L$ such that $\mathcal{D}(M)\subseteq  \mathcal{D}(L)$.
\end{lemma}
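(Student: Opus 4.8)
The plan is to reduce the claim to the case of non-thin indecomposable modules, since for thin $M$ one simply takes $L = M$. By Lemma \ref{lemma : subobjs and quots of non thin }, any indecomposable non-thin $B(n)$-module $M$ is isomorphic either to $M(\beta_{n-1}\alpha_{n-1})$ or to $R(i)$ for some $i \in [0,n-2]$, and in either case there is a proper nonzero submodule $N$ and a proper nonzero quotient $Q$ with $\dim N = \dim Q$. Applying Lemma \ref{lemma : stab space proper subobj and proper quot} gives $\mathcal{D}(M) \subseteq \mathcal{D}(N)$, where $N$ is one of the specific submodules listed in the proof of Lemma \ref{lemma : subobjs and quots of non thin }.

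The next step is to check that this $N$ can be taken to be thin, or, failing that, to iterate. For $M \simeq M(\beta_{n-1}\alpha_{n-1})$ the submodule produced is $S_n$ or $M(\beta_{n-1})$, both of which are thin by (the proof of) Lemma \ref{lemma : description_of_rooty}. For $M \simeq R(i)$ the submodules occurring are $S_{i+1}$, $M(\alpha_{i+1})$, $M(\beta_i)$, and $M(\alpha_{i+1}^*\beta_i)$; each of these is a string module distinct from $M(\beta_{n-1}\alpha_{n-1})$, hence thin. So in all cases we land directly on a thin indecomposable $L = N$, and no iteration is needed — the containment $\mathcal{D}(M) \subseteq \mathcal{D}(L)$ follows at once.

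The only genuine point requiring attention is making sure the submodule $N$ we select from the list in Lemma \ref{lemma : subobjs and quots of non thin } is the one for which the dimension-matching quotient $Q$ exists, i.e. that we invoke Lemma \ref{lemma : stab space proper subobj and proper quot} with a consistent pair $(N,Q)$; this is immediate from the pairings exhibited there ($\dim M(\beta_{n-1}) = \dim M(\alpha_{n-1})$, $\dim M(\alpha_j) = \dim M(\beta_j)$, $\dim M(\beta_{i+1}\alpha_i^*) = \dim M(\alpha_{i+1}^*\beta_i)$). I do not expect any real obstacle here: the work has all been front-loaded into Lemmas \ref{lemma : subobjs and quots of non thin } and \ref{lemma : stab space proper subobj and proper quot}, and this lemma is essentially their formal combination together with the observation that every relevant submodule appearing is thin.

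\begin{proof}
If $M$ is thin the statement is trivial, taking $L = M$. Assume then that $M$ is not thin. By Lemma \ref{lemma : subobjs and quots of non thin }, $M$ is isomorphic to $M(\beta_{n-1}\alpha_{n-1})$ or to $R(i)$ for some $i\in[0,n-2]$, and there exist a proper nonzero submodule $N$ and a proper nonzero quotient $Q$ of $M$ with $\dim N = \dim Q$. By Lemma \ref{lemma : stab space proper subobj and proper quot}, $\mathcal{D}(M)\subseteq \mathcal{D}(N)$.

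It remains to observe that $N$ may be chosen thin. If $M\simeq M(\beta_{n-1}\alpha_{n-1})$, then by the proof of Lemma \ref{lemma : subobjs and quots of non thin } we may take $N\in\{S_n, M(\beta_{n-1})\}$; both are string modules different from $M(\beta_{n-1}\alpha_{n-1})$, hence thin by Lemma \ref{lemma : description_of_rooty} (see also the subsequent Remark). If $M\simeq R(i)$, then again by the proof of Lemma \ref{lemma : subobjs and quots of non thin } we may take $N$ among $S_{i+1}$, $M(\alpha_{i+1})$, $M(\beta_i)$, $M(\alpha_{i+1}^*\beta_i)$, each of which is a string module different from $M(\beta_{n-1}\alpha_{n-1})$ and therefore thin. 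In either case $L := N$ is a thin indecomposable $B(n)$-module with $\mathcal{D}(M)\subseteq \mathcal{D}(L)$.
\end{proof}
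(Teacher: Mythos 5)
Your proof is correct and follows essentially the same route as the paper: reduce to the non-thin case, invoke Lemma \ref{lemma : subobjs and quots of non thin } to get a submodule--quotient pair with equal dimension vectors, and apply Lemma \ref{lemma : stab space proper subobj and proper quot}. The only cosmetic difference is that the paper simply takes $N=Q=S_{i+1}$ (the simple that is both a submodule and a quotient), whereas you allow any of the matched pairs and then check thinness; both work.
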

\begin{proof}If $M$ is thin the statement is trivial, so that we can assume that $M$ is not thin. In this case, by the proof of Lemma \ref{lemma : subobjs and quots of non thin }
 there exists an $i\in [0,n-1]$ such that $S_{i+1}$ is both a submodule and a quotient of $M$. Thus by Lemma \ref{lemma : stab space proper subobj and proper quot} we have that $\mathcal{D}(M)\subset \mathcal{D}(S_i)$.
\end{proof}

The following proposition is now an immediate consequence of Lemma \ref{lemma : stab spaces of non thin do not give new walls} and the definition of wall and chamber structure.

\begin{proposition}\label{theorem : rooty_walls}
    The wall and chamber structure of $B(n)$ is uniquely determined by conditions coming from thin indecomposables.
\end{proposition}

\begin{remark}
   Recall that by Proposition \ref{prop:  brick IFF thin incecomposable} a $B(n)$-module is a brick if and only if is a thin indecomposable. Therefore Proposition \ref{theorem : rooty_walls} could also be deduced from  \cite[Proposition 2.7]{asai}.
\end{remark}

\begin{corollary}\label{cor:number_of_walls}
    The number of walls in the wall and chamber structure of $B(n)$ is $(n+1)^2$.
\end{corollary}
\begin{proof}
    This is an immediate consequence of Lemma \ref{lemma : stab spaces of non thin do not give new walls} and Corollary \ref{cor : number of bricks}. 
\end{proof}

In the proof of Lemma \ref{lemma : subobjs and quots of non thin } we have classified the submodules of any non thin module. This allows us to find explicit inequality description for the stability spaces of indecomposable non thin modules. Notice that by the previous theorem this is not needed for the wall and chamber structure of $B(n)$, but we decided to include it in our paper for completeness since it is easy to deduce from what we have already seen. 

\begin{proposition}\label{prop : walls for non thin}The stability spaces for the non thin indecomposable modules of $B(n)$ are as follows:
\[
\mathcal{D}(M(\beta_{n-1}\alpha_{n-1}))=\left\{v=(v_n, \ldots, v_0)\in\mathbb{R}^{n+1}\mid v_n=v_{n-1}=0\right\},
\]
    \[
    \mathcal{D}(R(i))=\left\{v=(v_n, \ldots, v_0)\in\mathbb{R}^{n+1}\mid v_i=v_{i+1}=v_{i+2}=0\right\}, \quad i\in[0,n-2].
    \]
    In particular, if $M$ is not thin then $\mathcal{D}(M)$ is not a wall. 
\end{proposition}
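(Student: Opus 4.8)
The plan is to compute each stability space directly from the inequality description in Definition \ref{defn : semistability}, using the explicit lists of proper submodules already assembled in the proof of Lemma \ref{lemma : subobjs and quots of non thin }. For $M(\beta_{n-1}\alpha_{n-1})$ we have $\dim M(\beta_{n-1}\alpha_{n-1})=2e_n+e_{n-1}$ (in coordinates $(v_n,\ldots,v_0)$ this is the vector with a $2$ in slot $n$ and a $1$ in slot $n-1$), and its two proper nonzero submodules are $S_n$ and $M(\beta_{n-1})$, with dimension vectors $e_n$ and $e_n+e_{n-1}$. So $v\in\mathcal{D}(M(\beta_{n-1}\alpha_{n-1}))$ iff $2v_n+v_{n-1}=0$, $v_n\leq 0$ and $v_n+v_{n-1}\leq 0$. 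The first equation gives $v_{n-1}=-2v_n$; plugging into $v_n+v_{n-1}\leq 0$ gives $-v_n\leq 0$, i.e. $v_n\geq 0$, which together with $v_n\leq 0$ forces $v_n=0$ and hence $v_{n-1}=0$. This is exactly the claimed codimension-$2$ subspace.

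For $R(i)$ the argument is the same shape. From the proof of Lemma \ref{lemma : subobjs and quots of non thin } the proper nonzero submodules of $R(i)$ are $S_{i+1}$, $M(\alpha_{i+1})$, $M(\beta_i)$ and $M(\alpha_{i+1}^*\beta_i)$, with dimension vectors $e_{i+1}$, $e_{i+1}+e_{i+2}$, $e_i+e_{i+1}$ and $e_i+e_{i+1}+e_{i+2}$ respectively, while $\dim R(i)=e_i+2e_{i+1}+e_{i+2}$. Thus $v\in\mathcal{D}(R(i))$ iff $v_i+2v_{i+1}+v_{i+2}=0$ and $v_{i+1}\leq 0$, $v_{i+1}+v_{i+2}\leq 0$, $v_i+v_{i+1}\leq 0$, $v_i+v_{i+1}+v_{i+2}\leq 0$. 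Adding the two inequalities $v_{i+1}+v_{i+2}\leq 0$ and $v_i+v_{i+1}\leq 0$ and comparing with the equation $v_i+2v_{i+1}+v_{i+2}=0$ shows both must be equalities, so $v_i=-v_{i+1}=v_{i+2}$; combined with $v_{i+1}\leq 0$ and $v_i+v_{i+1}\leq 0$ (the latter now reading $0\leq 0$, no info) plus $v_{i+1}+v_{i+2}=0$ we get $v_{i+1}=-v_{i+2}$, hence $v_{i+2}=v_i=v_{i+2}$ is consistent but we also have $v_i=v_{i+2}$ and $v_i=-v_{i+1}$; using $v_i+v_{i+1}+v_{i+2}\leq 0$, i.e. $v_{i+2}\leq 0$, together with $v_{i+1}\leq 0$ meaning $-v_i\leq 0$ i.e. $v_i\geq 0$, and $v_i=v_{i+2}\leq 0$ forces $v_i=v_{i+1}=v_{i+2}=0$. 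So $\mathcal{D}(R(i))$ is the codimension-$3$ subspace $v_i=v_{i+1}=v_{i+2}=0$.

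The final assertion — that $\mathcal{D}(M)$ is not a wall when $M$ is non thin — is then immediate: by Lemma \ref{lemma : subobjs and quots of non thin } the only non thin indecomposables are $M(\beta_{n-1}\alpha_{n-1})$ and the $R(i)$, and we have just shown their stability spaces have codimension $2$ and $3$ respectively, hence never codimension $1$.

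The only place requiring care is making sure the submodule lists used are complete (the inequalities must be tested against \emph{all} proper nonzero subobjects, not just indecomposable ones — though by Remark \ref{rmk : stability conds indecp submods} indecomposable subobjects suffice, and by Remark \ref{rmk : stability conds indecp submods} the direct-sum subobjects add nothing new), so I would explicitly invoke Remark \ref{rmk : stability conds indecp submods} and then cite the direct-inspection classification of submodules from the proof of Lemma \ref{lemma : subobjs and quots of non thin }. The rest is the elementary linear-algebra manipulation of the defining equalities and inequalities sketched above; I expect the main (very mild) obstacle to be bookkeeping the parity-dependent labelling of the string $M(\alpha_{i+1}^*\beta_i)$ correctly so that its dimension vector is recorded as $e_i+e_{i+1}+e_{i+2}$.
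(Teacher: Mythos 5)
Your proposal is correct and follows essentially the same route as the paper: both compute $\mathcal{D}(M)$ directly from the explicit lists of proper subobjects assembled in the proof of Lemma \ref{lemma : subobjs and quots of non thin } (restricting to indecomposables via Remark \ref{rmk : stability conds indecp submods}). The only cosmetic difference is that the paper extracts the equalities by pairing each submodule with a quotient of equal dimension vector, whereas you derive them from the submodule inequalities alone combined with the defining equation $\langle v,\dim M\rangle=0$; the two manipulations are elementary and equivalent.
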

\begin{proof}First let $M\simeq M(\beta_{n-1}\alpha_{n-1})$. If  $v\in\mathcal{D}(M)$ then $\langle v,\dim M\rangle=0$, that is $v_{n-1}+2v_{n}=0$. But there is a subobject and a quotient of $M$ which are isomorphic to $S_n$ (cf. proof of Lemma \ref{lemma : subobjs and quots of non thin }). Thus $v_n=0$ and hence also $v_{n-1}=0$. This is a also sufficient condition for $v$ to belong to $\mathcal{D}(M)$ as all submodules of $M$ would have dimension vector whose entries different from the $n$-th and $n-1$-st are zero, so that we do not have any condition on $v_j$ for $j\neq n, n-1$.

Assume now that $M\simeq R(i)$ for some $i\in[0,n-2]$. Then all submodules and quotients of $M$ have dimension vector whose non zero entries are indexed by some subset of $[i,i+2]$. This means that the conditions on a vector $v\in\mathbb{R}^{n+1}$ to belong to $\mathcal{D}(M)$ involve only $v_{i}, v_{i+1}, v_{i+2}$ while all the others are free. If $v$ is in the stability space of $M$ has to verify 
\begin{equation}\label{eqn : stab spaces non thin}
    v_i+2v_{i+1}+v_{i+2}=0.
\end{equation} Moreover  (cf. proof of lemma \ref{lemma : subobjs and quots of non thin }) $M$ has submodules (among others) isomorphic to $S_{i+1}$, $M(\beta_i)$ and has a subquotients $S_{i+1}$ and $M(\alpha_i)$. We deduce that
\[
\langle v, \dim S_{i+1}\rangle=v_{i+1}=0, \quad \langle v, \dim M(\beta_i)\rangle=v_i+v_{i+1}=0.
\]
The first equality together with the second one tells us that $v_{i+1}=v_i=0$ and this combined with \eqref{eqn : stab spaces non thin} tells us that also $v_{i+2}=0$. By the above considerations these are also sufficient conditions.
\end{proof}

\begin{example}
We apply Proposition \ref{theorem : rooty_walls}  to determine the wall and chamber structure in the $n=2$ case. In principle, Corollary \ref{cor : number of *classes of strings} tells us that we need to compute $3^2+2=11$ stability spaces, but by the previous theorem we know that it is enough to determine 9 of them. Note that there are three stability spaces given by simples and other three pairs of stability spaces given by (dual) thin indecomposables (this is in fact a general feature, as explained in  the proof of the next Theorem \ref{theorem:explicit_descr_stab_spaces}).

 Recall that the thin indecomposable $B(2)$-modules are parameterized by the set $\mathscr{S}'=[0,2]\cup\left\{(a,b,\eta)\mid 0\leq a<b\leq 2,\   \eta\in\{\pm 1\}\right\}$. We denote by $M(c)$ the indecomposable module corresponding to $c\in \mathscr{S}'$.
 
\[
\mathcal{D}(M(0))= xy\hbox{-hyperplane}, \quad \mathcal{D}(M(1))= xz\hbox{-hyperplane}, \quad \mathcal{D}(M(2))= yz\hbox{-hyperplane}, 
\]
\[
\mathcal{D}(M(0,1,-1))= \mathcal{D}(M(\alpha_0))= \left\{
(x,y,z)\in\mathbb{R
}^3 \middle\vert \begin{array}{l}
y+z=0,  \\
z\leq 0. 
\end{array}\right\}
\]
\[
\mathcal{D}(M(0,1,1))= \mathcal{D}(M(\beta_0)) = \left\{
(x,y,z)\in\mathbb{R
}^3\middle\vert \begin{array}{l}
y+z=0,  \\
z\geq 0. 
\end{array}\right\}
\]
\[
\mathcal{D}(M(1,2,-1))= \mathcal{D}(M(\alpha_1)) =\left\{
(x,y,z)\in\mathbb{R
}^3\middle\vert \begin{array}{l}
x+y=0,  \\
y\leq 0. 
\end{array}\right\},
\]
\[
\mathcal{D}(M(1,2,1))= \mathcal{D}(M(\beta_1)) =\left\{
(x,y,z)\in\mathbb{R
}^3\middle\vert \begin{array}{l}
x+y=0,  \\
y\geq 0. 
\end{array}\right\},
\]
\[
\mathcal{D}(M(0,2,-1))= \mathcal{D}(M(\alpha_0\beta_1^*)) = \left\{
(x,y,z)\in\mathbb{R
}^3\middle\vert \begin{array}{l}
x+y+z=0,  \\
x\leq 0, \ z\leq 0
\end{array}\right\},
\]
\[
\mathcal{D}(M(0,2,1))= \mathcal{D}(M(\alpha_1^*\beta_0)) = \left\{
(x,y,z)\in\mathbb{R
}^3\middle\vert \begin{array}{l}
x+y+z=0,  \\
x\geq 0, \ z\geq 0
\end{array}\right\}.
\]
 Figure \ref{fig:WCB(2)} illustrates the wall and chamber structure of $B(2)$.
 
\begin{figure}[H]
    \centering
    \includegraphics[width=15cm]{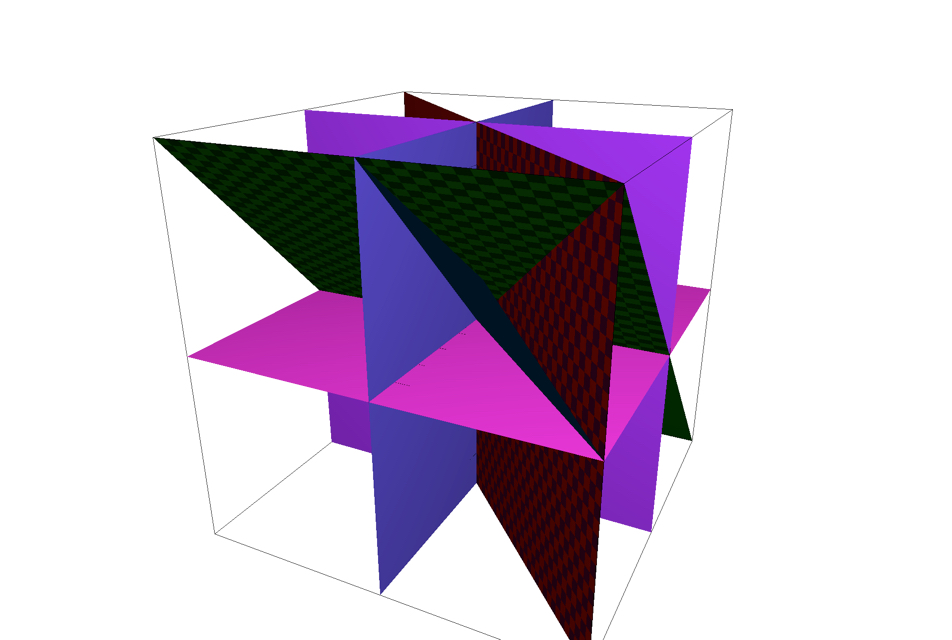}
    \caption{Wall and chamber structure of $B(2)$.}
    \label{fig:WCB(2)}
\end{figure}

In Figure \ref{fig:WCB(2)}, the three hyperplanes $\mathcal{D}(M(0)), \mathcal{D}(M(1))$ and $\mathcal{D}(M(2))$ appear in pink, purple and blue respectively. Moreover, $\mathcal{D}(M(0,1,1))$ and $\mathcal{D}(M(0,1,-1))$ appear in black and green, $\mathcal{D}(M(1,2,-1))$ and $\mathcal{D}(M(1,2,1))$ appear in black and red, and finally $\mathcal{D}(M(0,2,-1))$ and $\mathcal{D}(M(0,2,1))$ are in dark blue. 
\end{example}

The following theorem generalises the previous example, providing an explicit description of all the relevant stability spaces:
\begin{theorem}\label{theorem:explicit_descr_stab_spaces}
    We have the following description of the stability spaces:
\begin{enumerate}
        \item $\mathcal{D}(M(\epsilon_i))=\{(v_n, v_{n-1} \ldots, v_{0})\in\mathbb{R}^{n+1}\mid v_{i}=0 \}$,
        \item if $c=(a,b,-1)$ for some $0\leq a<b\leq n$, then 
        \[
\mathcal{D}(M(a,b,-1))=\left\{(v_n, v_{n-1} \ldots, v_{0})\in\mathbb{R}^{n+1}\middle\vert
   \begin{array}{l}
   v_a+v_{a+1}+\ldots + v_b=0, \\
   \sum_{i=a'}^{b'}v_i\leq 0, \begin{array}{l}
     [a',b']\subseteq [a,b], \ a'\equiv_2 a,     \\
         (b'=b \hbox{ or }b'\equiv_2 a)
   \end{array}
   \end{array}\right\},
   \]
\item if $c=(a,b,1)$ for some $0\leq a<b\leq n$, then 
        \[
\mathcal{D}(M(a,b,1))=\left\{(v_n, v_{n-1} \ldots, v_{0})\in\mathbb{R}^{n+1}\middle\vert
   \begin{array}{l}
   v_a+v_{a+1}+\ldots + v_b=0, \\
   \sum_{i=a'}^{b'}v_i\geq 0, \begin{array}{l}
     [a',b']\subseteq [a,b], \ a'\equiv_2 a,     \\
         (b'=b \hbox{ or }b'\equiv_2 a)
   \end{array}
   \end{array}\right\},
   \]    \end{enumerate}
   where $x\equiv_2 y$ indicates that $x$ and $y$ have the same parity.
\end{theorem}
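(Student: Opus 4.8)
The plan is to handle each of the three families of thin indecomposables separately, using the explicit string combinatorics of Section~\ref{sec : string combinatorics} to enumerate submodules (or quotients) and then read off the defining (in)equalities of $\mathcal{D}(M)$ from Definition~\ref{defn : semistability}. For part (1) there is nothing to do: $M(\epsilon_i)=S_i$ is simple, so it has no proper nonzero subobject, and $v$-semistability reduces to the single equation $\langle v,\dim S_i\rangle=v_i=0$. For parts (2) and (3), by Lemma~\ref{ lemma : parameterisation of strings} the $*$-class of the relevant alternating string is represented by $\psi((a,b,\eta))$, and by Lemma~\ref{lemma : fw bijection on image} the associated module $M(a,b,\eta)$ is thin with dimension vector supported on the interval $[a,b]$, each entry equal to $1$. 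Hence the equation $\langle v,\dim M(a,b,\eta)\rangle=0$ is exactly $v_a+v_{a+1}+\cdots+v_b=0$, which appears in all three cases, and all remaining conditions involve only the coordinates $v_a,\ldots,v_b$ (since every subobject and quotient is supported inside $[a,b]$).

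The heart of the proof is the combinatorial classification of the submodules of a thin string module $M(w)$ for $w=\psi((a,b,\eta))$. Since $B(n)$ is special biserial and $M(w)$ is a string module, its submodules correspond to the \emph{substrings} of $w$ that are ``closed under arrows pointing inward'', i.e.\ to the subwalks $w'=\gamma_k\ldots\gamma_\ell$ of $w$ such that the submodule they span is invariant; concretely, a subinterval $[a',b']\subseteq[a,b]$ gives a submodule precisely when no honest arrow of $Q(n)$ maps from outside $[a',b']$ into $[a',b']$ along $w$. Because $w$ is alternating, walking along $w$ the honest arrows $\alpha_i$ (going $i+1\to i$) and $\beta_i$ (going $i\to i+1$) alternate in a fixed pattern determined by the parities encoded in $\psi$, and a direct local analysis shows that the subintervals $[a',b']$ yielding submodules are exactly those with $a'\equiv_2 a$ and ($b'=b$ or $b'$ of the prescribed parity). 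Dually, the quotients correspond to the complementary parity constraints, which is what lets us switch between the ``$\leq 0$ on subobjects'' and ``$\geq 0$ on quotients'' formulations of Definition~\ref{defn : semistability}; this is why in part (2) (proved via subobjects) the condition is $\sum_{i=a'}^{b'}v_i\le 0$ with $b'\equiv_2 a+1$, while in part (3) (proved via quotients, or equivalently by applying part (2) to $M(a,b,-1)^*\cong M(a,b,1)$ and using Lemma~\ref{lemma : string length 2 alternating}) the parity shifts to $b'\equiv_2 a$ and the inequality reverses.

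Having listed the submodules, I would verify both inclusions defining $\mathcal{D}(M)$: the inequalities $\sum_{i=a'}^{b'}v_i\le 0$ are necessary because each such interval is the dimension vector of an actual subobject, and they are sufficient because \emph{every} nonzero proper subobject of the thin indecomposable $M(a,b,\eta)$ is a direct sum of the listed ``interval'' indecomposables (using Remark~\ref{rmk : stability conds indecp submods} to reduce to indecomposable subobjects), so no further constraints arise. The main obstacle I anticipate is the bookkeeping in the parity case analysis: one must carefully track, for each of the four shapes of $\psi((a,b,\eta))$ in Lemma~\ref{ lemma : parameterisation of strings}, which endpoints of a subinterval are ``tops'' (targets of two $*$-arrows in $w$, hence free to be quotiented) versus ``bottoms'' (sources, hence forced into submodules), and to confirm that the final answer collapses into the uniform description with the condition ``$b'=b$ or $b'\equiv_2 a{+}1$'' (resp.\ $b'\equiv_2 a$). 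The endpoint $b'=b$ is special because $b=t(w)$ behaves differently from interior vertices regardless of parity, which is why it is singled out. Once the subinterval bookkeeping is pinned down, assembling the stability space from Remark~\ref{rmk : stability conds indecp submods} and Lemma~\ref{lemma : stab space proper subobj and proper quot} is routine, and part~(3) follows from part~(2) by the $*$-duality together with the sign flip in Definition~\ref{defn : semistability}.
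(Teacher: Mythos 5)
Your proposal follows essentially the same route as the paper's proof: part (1) is immediate for simple modules, part (2) is obtained by classifying the subintervals of $[a,b]$ that support submodules via the source/target parity pattern along the alternating string $\psi((a,b,\eta))$, and part (3) is deduced from (2) by observing that subobjects of $M(a,b,-1)$ coincide with quotients of $M(a,b,1)$, which reverses the inequalities. One small slip: a subinterval supports a \emph{submodule} when no honest arrow maps from it into its complement (closure under the action), not when no honest arrow maps into it from outside --- the latter is the quotient condition --- but your parity conclusions and the duality argument are stated correctly, so this is only a matter of wording.
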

\begin{proof}
    Claim \emph{(1)} follows immediately from the fact that $M(\epsilon_i)$ is nothing but a simple module $\simeq S_i$ whose dimension vector has only one nonzero entry in position $i$.

 Let $M\simeq M(a,b,\eta)$. Observe that any submodule or quotient of a thin module is itself thin, therefore a submodule $N$, resp.  a quotient $R$, of $M$ is uniquely determined by the set $i\in [a,b]$ such that $N_i\neq 0$, resp. $R_i\neq 0$. Let $w=\gamma_m\ldots \gamma_1$ be the string $\psi((a,b,\eta))$ (see proof of Lemma \ref{ lemma : parameterisation of strings}).  By definition of string representations it is clear that $N$ is a submodule if and only if whenever there is an $i\in[n]$ such that $N_i\neq (0)$ then the following two conditions hold:
 \begin{itemize}
     \item there exists  $j\in[0,m]$ such that $f_w(j)=i$,
     \item if there is a $k\in[m]$ such that $s(\Tilde{\gamma_k})=i$ then $N_{t(\Tilde{\gamma_k})}\neq 0$.
 \end{itemize}
 Dually, $R$ is a quotient if and only if whenever there is an $i\in[n]$ such that $R_i\neq (0)$ then the following two conditions hold:
 \begin{itemize}
     \item there exists  $j\in[0,m]$ such that $f_w(j)=i$,
     \item if there is a $k\in[m]$ such that $t(\Tilde{\gamma_k})=i$ then $R_{s(\Tilde{\gamma_k})}\neq 0$.
 \end{itemize}
We also observe that if $M_i\neq (0)$ then either $i$ is source for all adjacent $\Tilde{\gamma}$ or is target for all adjacent $\Tilde{\gamma}$. Therefore \emph{(2)} and \emph{(3)} are  equivalent: $\dim M(a,b,-1)=\dim M(a,b,1)$ and if follows from what we just discussed that a $B(n)$-module $N$ is a subobject of $M(a,b,-1)$ if and only if it is a quotient of $M(a,b,1)$. This implies that the defining inequalities of $\mathcal{D}(M(a,b,1))$ are obtained by reversing the ones of $\mathcal{D}(M(a,b,-1))$ and viceversa. 

    Thus we are reduced to show \emph{(2)}. Let   $M=M(a,b,-1)$. Since $\dim M_i=1$ for any $i\in[a,b]$ and $=0$ otherwise, we have that $\langle (v_n, v_{n-1}, \ldots, v_0),\dim M\rangle=0$ if and only if $\sum_{i=a}^b v_i=0$. Note that $S_i$ is a submodule if and only if there exists a $k\in[m]$ such that $t(\Tilde{\gamma_k})=i$ (and hence $i\in[a,b]$). Let $w=\gamma_m\ldots \gamma_1=\psi((a,b,-1))$. In this case we have $\gamma_m=\alpha_a^*$ and hence $a$ is a target of $\Tilde{\alpha_a^*}$. Therefore the targets are the vertices labelled by elements $c\equiv_2 a$. We conclude that $S_i$ is a submodule of $M$ if and only if $i\equiv_2 a$. By the classification of indecomposable thin modules we also know that all indecomposable submodules of $M$ corresponding to alternating walks have to be parameterized by a subset of
    $\left\{(a', b', \eta')\mid a\leq a'<b'\leq b, \ \eta'\in\{\pm 1\}\right\}$. By the same reasoning as  before, we conclude that $a'\equiv_2 a$. If $b'\neq b$, once again it has to satisfy $b'\equiv_2 a$, otherwise it would be source of some honest arrow $\Tilde{\gamma_k}$. Instead, the vertex $b=f_w(m)$ cannot be source of any arrow not contained in $[a',b]$ and hence $M(a',b,-1)$ is (isomorphic to) a submodule of $M$ for any $a'\equiv_2 a$. The claim now follows by taking dimension vectors and calculating the scalar products with $v$.
\end{proof}

\begin{remark}
    By \cite{asai} and \cite{BST19} the chambers obtained by removing the walls described in Theorem \ref{theorem:explicit_descr_stab_spaces} parameterize isomorphism classes of 2-silting complexes for $B(n)$. Moreover, if a pair of 2-silting complexes correspond to adjacent chambers then there is a mutation taking one of the 2-silting complexes to the other. 
    
    For instance, in the case $n=2$ we can exploit Theorem \ref{theorem:explicit_descr_stab_spaces} (and Figure \ref{fig:WCB(2)}) to see that there are exactly 20 chambers and hence 20 (isomorphism classes of) 2-silting complexes. To obtain the mutation graph it is hence enough to draw a vertex in the interior of any chamber and an edge between vertices of chambers having a wall in common. It would be interesting to apply our explicit description to deduce information on $B(n)$-silting complexes and their properties for general $n$.
\end{remark}

\section{Perverse sheaves on the complex projective space and $B(n)$}\label{sec:perverse}

\subsection{Generalities on perverse sheaves}

In this section we recall some definitions and important facts on perverse sheaves, see \cite{BBD, gm2}

\subsubsection{Topologically stratified spaces}
 We recall the inductive definition of a topologically stratified space in the sense of \cite{gm2}, see also \cite[Definition 4.1.1]{KW}.
 
 A 0-dimensional topologically stratified space is a discrete union of points. Let $d\geq1$. A $d$-dimensional topologically stratified space $X$ is a paracompact Hausdorff topological space with a finite filtration by closed subset $X=X_d\supset \ldots \supset X_0\supset X_{-1}=\emptyset$ such that $X_i \smallsetminus X_{i-1}$ is a (possibly empty) $i$-dimensional topological manifold, and such that each $x\in X_i \smallsetminus X_{i-1}$ has an open neighborhood filtration-preserving homeomorphic to $\R^i\times C(L_i)$ where 
 \begin{itemize}
     \item $L_i$ is some compact $(d-i-1)$-topologically stratified space called {\em link} of $X_i\smallsetminus X_{i-1}$ at $x$.
     \item $C(L_i) = L_i \times [0, 1)/L_i \times\{0\}$ denotes the {\em open cone on $L_i$} with the induced filtration by the vertex and the subsets $L_k \times[0, 1)/L_k \times\{0\}$, with $k\leq i$.
 \end{itemize}
 The (non empty) connected components of $X_{i}\smallsetminus X_{i-1}$ are called {\em strata} of $X$.

\subsubsection{The constructible derived category}

Let $\kk$ be a field. Let $X$ be a topologically stratified space and let $\der{b}{X}$ be the bounded derived category of sheaves of $\kk$-vector spaces on $X$. The {\em constructible derived category of $X$}, which we will denote by $\constr{c}{X}$, is the full subcategory of $\der{b}{X}$ with objects the complexes whose cohomology sheaves are locally-constant when restricted to each stratum of $X$. Note that $\constr{c}{X}$ is a triangulated category.

Let $Z$ be a closed union of strata of $X$. The complementary maps  $\imath\colon Z\to X$ and $\jmath\colon U=X\smallsetminus Z\to X$ give rise to a triangulated recollement, see for instance \cite[\S1.4.1]{BBD}, of the form
\begin{equation}\begin{tikzcd} [row sep={0.6cm},column sep={0.6cm}]\label{equation:6ff}
\constr{c}{Z}\ar[rr,"\imath_*=\imath_!"]&&\constr{c}{X}\ar[ll,"\imath^!",bend left=35]\ar[ll,"\imath^*", swap,bend right=50]\ar[rr,"\jmath^*=\jmath^!"]&&\constr{c}{U}\ar[ll,"\jmath_*",bend left=35]\ar[ll,"\jmath_!", swap,bend right=50]\end{tikzcd}.\end{equation}

In particular, $(\imath^*,\imath_*), (\imath_*,\imath^!), (\jmath_!,\jmath^*)$ and $(\jmath^*,\jmath_*)$ are adjoint pairs.

\subsubsection{Perverse t-structures}

Let $X$ be a topologically stratified space with strata $S_i$. A \emph{perversity $p$ on $X$} is a $\Z$-valued function from the set of strata of $X$.

Let $\imath_S\colon S\to X$ be the inclusion of a stratum into $X$. The pair of subcategories
\begin{equation}\label{perv_t_str}
\begin{split}
    {^p{D^{\leq0}}}&=\{\sh{A}\in\constr{c}{X}\mid \sh{H}^k(\imath_S^*\sh{A})=0 \quad k>p(S)\} \\
    {^p{D^{\geq0}}}&=\{\sh{A}\in\constr{c}{X}\mid \sh{H}^k(\imath_S^!\sh{A})=0 \quad k<p(S)\}
\end{split},
\end{equation}
where $\sh{H}^k(\sh{E})$ denotes the cohomology sheaf of the complex of sheaves $\sh{E}$, is a bounded t-structure on $\constr{c}{X}$ for any perversity $p$ on $X$, see \cite[2.1.4]{BBD}. The t-structure $({^p{D^{\leq0}}}, {^p{D^{\geq0}}})$ is called the \emph{$p$-perverse t-structure on $\constr{c}{X}$}. The category of $p$-perverse sheaves on $X$ is defined as the {\em heart} of such t-structure, that is $\perv{p}{X}={^p{D^{\leq0}}}\cap{^p{D^{\geq0}}}$. It follows from the general theory of t-structure that $\perv{p}{X}$ is an abelian subcategory of $\constr{c}{X}$. The triangulated recollement (\ref{equation:6ff}) descend to an abelian recollement of categories of perverse sheaves, with \emph{perverse functors} defined by setting $^pF={^pH^0}\circ F \circ \epsilon$, where $F\in\{\imath^*,\imath_*,\imath^!,\jmath_!,\jmath^*,\jmath_* \}$, $^pH^0\colon\constr{c}{Y}\to\perv{p}{Y}$ for $Y\in\{Z,X,U\}$ is the cohomological functor left inverse to the inclusion $\epsilon\colon\perv{p}{Y}\to\constr{c}{Y}$, see \cite[\S1.3.6]{BBD}.

\subsubsection{Perverse sheaves as representations of a quiver with relations}

Under suitable topological assumptions on $X$, if $\kk$ is algebraically closed we have an exact equivalence between categories of perverse sheaves and representations of a quiver with relations.

\begin{theorem}\cite[Corollary 5.3 and \S5.1]{MR4349392} \label{thm:rep_perv_sh}
Let $\kk$ be an algebraically closed field and $X$ a topologically stratified space with finitely many strata, each with finite fundamental group. For any perversity $p$ on $X$ the category $\perv{p}{X}$ is equivalent to a category of representations of a quiver with relations.
\end{theorem}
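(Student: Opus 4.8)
The plan is to produce a finite-dimensional $\kk$-algebra $A$ together with an exact equivalence $\perv{p}{X}\simeq\mod{A}$, and then to invoke Gabriel's theorem so as to replace $\mod{A}$ by the representation category of a finite quiver with relations; the passage $\mod{A}\simeq(Q,I)\text{-rep}$ is the standard one already recorded above (\cite[Chapter~II, Theorem~3.7]{MR2197389}).

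\emph{Step 1: $\perv{p}{X}$ is a finite length, $\kk$-linear, $\mathrm{Hom}$- and $\mathrm{Ext}^1$-finite abelian category with finitely many simple objects.} Being the heart of a bounded t-structure on $\constr{c}{X}$, the category $\perv{p}{X}$ is abelian and $\kk$-linear. Running the recollement attached to a closed union of strata and inducting on the number of strata, one sees that every perverse sheaf has finite length and that its composition factors are the intermediate extensions $\imath_{S!\ast}(\sh{L}[\dim S])$ of irreducible $\kk$-local systems $\sh{L}$ on the strata $S$. Since there are finitely many strata and each $\pi_1(S)$ is finite, the group algebra $\kk[\pi_1(S)]$ is a finite-dimensional $\kk$-algebra, hence has only finitely many simple modules up to isomorphism; so $\perv{p}{X}$ has only finitely many simple objects. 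The same recollement induction, together with finiteness of the relevant cohomology, yields that $\Mor{\perv{p}{X}}{\sh{A}}{\sh{B}}$ and $\Ext{\perv{p}{X}}{1}{\sh{A}}{\sh{B}}$ are finite-dimensional for all $\sh{A},\sh{B}$.

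\emph{Step 2: $\perv{p}{X}$ admits a projective generator.} This is where the content lies, and the step I expect to be the main obstacle. The base case is a single stratum $S$: here $\perv{p}{S}$ is the category of finite-dimensional $\kk[\pi_1(S)]$-modules, which has enough projectives precisely because $\kk[\pi_1(S)]$ is finite-dimensional --- this is exactly where the finiteness of $\pi_1(S)$ is indispensable, since for $S=\C^{\ast}$ one would instead get $\kk[t,t^{-1}]$-mod, which has no projective generator. For the inductive step one uses that the perverse t-structure is engineered so that the recollement $\constr{c}{Z}\to\constr{c}{X}\to\constr{c}{U}$ of a closed stratum induces an \emph{abelian} recollement $\perv{p}{Z}\to\perv{p}{X}\to\perv{p}{U}$, and then transports projectives through it: the perverse left extension of a projective on $U$ is projective on $X$, while the remaining indecomposable projectives are built from projectives on $Z$ after passing to projective covers. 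Summing these over the (finitely many, by Step 1) simple objects produces a projective generator $P$. The delicate points are that the recollement of hearts really is exact in the required sense, and that the projective covers used in the inductive step exist and keep everything of finite length and finite-dimensional --- both guaranteed by the finiteness established in Step 1 and by the finite-$\pi_1$ hypothesis.

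\emph{Step 3: conclusion via Morita theory and Gabriel's theorem.} Set $A=\End{P}^{\mathrm{op}}$. Since $P$ lies in a length category with finite-dimensional endomorphism ring, $A$ is a finite-dimensional $\kk$-algebra, and $\Mor{\perv{p}{X}}{P}{-}\colon\perv{p}{X}\to\mod{A}$ is an exact equivalence because $P$ is a projective generator of an abelian category with the finiteness of Step 1. Finally, $\kk$ being algebraically closed, Gabriel's theorem furnishes a finite quiver $Q$ --- vertices the isomorphism classes of simple perverse sheaves, arrows a basis of the extension spaces between consecutive simples --- and an admissible ideal $I$ with $A$ Morita equivalent to $\kk Q/I$; composing the equivalences gives $\perv{p}{X}\simeq\mod{A}\simeq(Q,I)\text{-rep}$, as claimed.
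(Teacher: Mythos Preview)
The paper does not supply its own proof of this statement: it is quoted verbatim from \cite{MR4349392}, with the last reduction to a quiver with relations attributed (as you also do) to \cite[Chapter~II, Theorem~3.7]{MR2197389}. Your outline is correct and is precisely the strategy of the cited reference: the substantive content is Step~2, the existence of a projective generator, proved by induction on the number of strata through the recollement, with the finite-$\pi_1$ hypothesis used exactly where you say; Steps~1 and~3 are standard.

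One point of precision in Step~2: the triangulated recollement does not literally restrict to a recollement of the perverse hearts, since $\jmath_!,\imath^*$ are only right t-exact and $\jmath_*,\imath^!$ only left t-exact. The inductive construction therefore uses the perverse truncations ${}^p\jmath_!$ etc., and the projective cover in $\perv{p}{X}$ of a simple supported on $Z$ is obtained as a suitable universal extension of $\imath_*$ of a projective on $Z$, not as $\imath_*$ itself. You gesture at this (``perverse left extension'', ``passing to projective covers''), so this is a remark on wording rather than a gap.
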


\subsection{Perverse sheaves on the complex projective space}

Let $n\geq1$ and consider the complex projective space $X=\PP^n$ with the filtration
\[
X=X_{2n}\supset X_{2n-1}=X_{2n-2}=\PP^{n-1}\supset\ldots\supset X_3=X_2=\PP^1\supset X_1=X_0=\pt\supset X_{-1}=\emptyset,
\]

so that there are $n+1$ (non empty) strata  indexed by their complex dimension
\[
S_i=\PP^i\smallsetminus\PP^{i-1}\cong\C^i \quad i\in[0,n].
\]
Note that all the strata $S_i$ of $X$ are such that $\pi_1(S_i)$ is trivial for all $i\in[0,n]$.

\begin{remark}
    It might seem odd that half of the strata are empty, but this is a very classical stratification of the complex projective space when we see it as a partial flag variety. Indeed, each non-empty stratum is a Schubert cell, that is an orbit for the action of a Borel subgroup of $\mathrm{SL}_{n+1}$.
\end{remark}

Let $m$ be the middle perversity on $X$ (that is $m(S_i)=-i$ for all $i\in[0,n]$) and $\kk$ be an algebraically closed field. 

By Theorem \ref{thm:rep_perv_sh}, there is an exact equivalence between the category of $m$-perverse sheaves on $X$ and the category of representations of the quiver $(Q(n),I(n))$ defined in \S\ref{section:background}, that is
\[
\perv{m}{\PP^n}\simeq \mod{\kk Q(n)/I(n)}.
\]

The category of middle perversity perverse sheaves on $X=\PP^n$ constructible with respect to the Schubert stratification is equivalent to a regular block of the parabolic category $\mathcal{O}$ for $\mathfrak{sl}_{n+1}$ and (maximal) parabolic $\mathfrak{q}\simeq \mathfrak{sl}_{n}$ as for \cite[Proposition 3.5.1]{MR1322847}. On the other hand such a block is equivalent to  the category of finite dimensional modules for 
$\kk Q(n)/I(n)$ we investigated in this paper (see \cite[Example 1.1]{stroppelTQFT}).

\subsection{Back to Bridgeland stability conditions}

Recall that we are aiming at getting a better understanding of  Bridgeland's space of stability conditions $\stab{\constr{c}{\PP^n}}$ on the triangulated category $\constr{c}{\PP^n}$. 

By \cite[\S1.5]{MR2119139}, the category of middle perverse sheaves on $\PP^n$ forms a faithful heart since we are stratifying the projective space by affine subvarieties whose cohomology is concentrated in one degree. This means that if we take the derived category of middle perverse sheaves on $\PP^n$ we get back to $\constr{c}{\PP^n}$.

Let $K(\constr{c}{X})$ denote the Grothendieck group of the constructible derived category of $X$. We now recall the definition of Bridgeland stability conditions, see \cite[Definition 1.1]{bridgeland07}. A {\em stability condition on $\constr{c}{X}$} is a pair $(Z,\sh{P})$ where:
\begin{enumerate}
    \item $Z\colon K(\constr{c}{X})\to\C$ is a group homomorphism,
    \item $\sh{P}=\left( \sh{P}(\phi) \right)_{\phi\in\R}$ with $\sh{P}(\phi)$ is a full additive subcategory of $\constr{c}{X}$,
\end{enumerate}
which satisfy the following axioms:
\begin{enumerate}[i)]
    \item if $E\in\sh{P}(\phi)$, then $Z(E)=r(E)\exp(i\pi\phi)$ for some $r(E)\in\R_{>0}$,
    \item for all $\phi\in\R$ we have $\sh{P}(\phi+1)=\sh{P}(\phi)[1]$,
    \item if $\phi_1>\phi_2$ and $A_j\in\sh{P}(\phi_j)$ for $j=1,2$ then $\Mor{\constr{c}{X}}{A_1}{A_2}=0$,
    \item for each non zero object $E\in\constr{c}{X}$ there are a finite sequence of real numbers $\phi_1>\phi_2>\ldots>\phi_n$ and a collection of triangles
    \begin{equation*}
    \begin{tikzcd}[column sep={0.5cm}]
    0=E_0\ar[rr] && E_1\ar[rr]\ar[dl]  && E_2\ar[rr]\ar[dl] && \ldots\ar[rr] && E_{n-1}\ar[rr] && E_n=E \ar[dl] \\ 
    & A_1\ar[ul,dashed] && A_2\ar[ul,dashed] && && && A_n\ar[ul,dashed]
    \end{tikzcd}
    \end{equation*}
    with objects $A_j\in\sh{P}(\phi_j)$ for all $j$, where a dashed arrow $A\dashrightarrow B$ represents a morphism $A\to B[1]$.
\end{enumerate}

A stability condition $(Z,\sh{P})$ determines a bounded t-structure on $\constr{c}{X}$ with heart the extension-closed subcategory generated by objects in $\sh{P}(\phi)$ with $\phi\in(0,1]$. Viceversa, a bounded t-structure on $\constr{c}{X}$ together with a stability function on the heart with the abelian analogue of axiom iv) above determines a stability condition on $\constr{c}{X}$ by \cite[Proposition 5.3]{bridgeland07}.

A stability condition is {\em locally-finite}, see \cite[Definition 5.7]{bridgeland07}, if there exists a real number $\epsilon>0$ such that for all $t\in\R$ the category obtained by taking the extension-closure:
\[
\langle E\in\sh{P}(\phi) \mid \phi\in(t-\epsilon,t+\epsilon)\rangle
\]
has finite length.

The set $\stab{\constr{c}{X}}$ of locally finite stability conditions on $\constr{c}{X}$ is a complex manifold by \cite[Theorem 1.2]{bridgeland07}. Moreover, 
we know that the wall and chamber structure of the category $\mathcal{A}$ of modules  over a finite dimensional algebra $A$  encodes to the wall and chamber structure of an open subset of $\stab{\der{b}{\mathcal{A}}}$ in the following sense. Let $\mathcal{C}(\mathcal{A})$ be the subvariety of $\stab{\der{b}{\mathcal{A}}}$ of stability conditions nearby $\mathcal{A}$, that are the points $(Z, \mathcal{P})$ such that $\mathcal{A}\subseteq \bigcup_{\phi\in(-1,1)}\mathcal{P}(\phi)$. By \cite[Lemma 7.3]{bridgelandWC},  $\mathcal{C}(\mathcal{A})$ is an open submanifold of $\stab{\der{b}{\mathcal{A}}}$ and, by \cite[Proposition 7.4]{bridgelandWC} there is a surjective map $F: \mathcal{C}(\mathcal{A})\rightarrow \mathbb{R}^{\textrm{rk}(K(\mathcal{A}))}$ such that the preimage under $F$ of a wall, resp. of  chamber, in the wall and chamber structure of $A$ is a wall, resp. of a chamber of $\mathcal{C}(\mathcal{A})$.  Since $\mod {B(n)}$ is equivalent to $\perv{m}{\PP^n}$, the wall and chamber structure of $B(n)$ allows us to understand the wall crossing phenomena on the manifold of  stability conditions nearby $\perv{m}{\PP^n}$ in 
 $\stab{{ \der{b}{\mod {B(n)}}}}= \stab{\der{b}{\perv{m}{\PP^n}}}=\stab{\constr{c}{\PP^n}}$.

\subsection{Geometric interpretation of the thin $B(n)$-modules}

Let us denote the inclusion of a stratum into $X$ by $\imath_{k}\colon S_k\hookrightarrow X$. The simple perverse sheaves (with respect to the middle perversity) are in bijection with strata of $X$: they can be expressed using the intermediate extension functor ${^m\imath_k}_{!*}\coloneqq \im({^m\imath_k}_!\to{^m\imath_k}_*)$, see \cite[1.4.22]{BBD} or \cite[\S3.3]{Achar}, as
\[
\ic{}{r}={^m\imath_{n-r}}_{!*}\kk_{\C^{n-r}}[n-r]=\kk_{\PP^{n-r}}[n-r] \quad 0\leq r \leq n,
\]
where in the last expression we omit an (exact) extension by zero. 



Let $k,l\in\mathbb{N}$. For $n\geq k > l \geq 0$ we set $\imath_{k,l}\colon S_l\to \cup_{r=l}^k S_r$ and $\jmath_{k,l}\colon\cup_{r=l+1}^k S_r\to \cup_{r=l}^k S_r$. 

\begin{remark}
Note that $\imath_{k,l}$ is a closed inclusion, so that ${\imath_{k,l}}_!={\imath_{k,l}}_*$. Be aware that, despite of the similar notation, the inclusion of a single stratum $\imath_k$ is in general neither open nor closed (it is closed only when $k=0$ and it is open only when $k=n$). Instead, the morphism $\jmath_{k,l}$ is always open. Moreover, $j_{l+1,l}$ is affine and therefore $^m{j_{l+1,l}}=j_{l+1,l}$ by \cite[\S5.3]{DCM}.
\end{remark}

Let $0\leq a < b \leq n$, we define
\begin{equation}\label{thin_perverse1}
\mathcal{M}(a,b,1)={^m\jmath_{n,0}}_{!*}{^m\jmath_{n,1}}_{!*}\ldots{^m\jmath_{n,n-b-1}}_{!*}{^m\jmath_{n,n-b}}_?\ldots {^m\jmath_{n,n-a-2}}_*{^m\jmath_{n,n-a-1}}_!{\imath_{n,n-a}}_*\kk_{\mathbb{C}^{n-a}[n-a]}
\end{equation}
where $?=\begin{cases}
    * \ \hbox{if}\ a \equiv_2 b,\\ ! \ \hbox{if}\ a \not\equiv_2 b.
\end{cases}$, and 
\begin{equation}\label{thin_perverse2}
\mathcal{M}(a,b,-1)={^m\jmath_{n,0}}_{!*}{^m\jmath_{n,1}}_{!*}\ldots{^m\jmath_{n,n-b-1}}_{!*}{^m\jmath_{n,n-b}}_?\ldots {^m\jmath_{n,n-a-2}}_!{^m\jmath_{n,n-a-1}}_*{\imath_{n,n-a}}_*\kk_{\mathbb{C}^{n-a}[n-a]}
\end{equation}
where $?=\begin{cases}
    ! \ \hbox{if}\ a \equiv_2 b,\\ * \ \hbox{if}\ a \not\equiv_2 b.
\end{cases}$.

\begin{remark}
We note that when $b=a+1\neq n$ we recover the standard and costandard objects of the highest weight category $\perv{m}{X}$; in fact, we have 
\begin{equation*}
    \begin{split}
         \mathcal{M}(a,a+1,1) & \cong{{^m\jmath_{n,0}}_{!*}{^m\jmath_{n,1}}_{!*}\ldots{^m\jmath_{n,n-a-2}}_{!*}{\jmath_{n-a,n-a-1}}_! {\imath_{n,n-a}}_*\kk_{\C^{n-a}}[n-a] }\\ &\cong{\imath_{n-a}}_!\kk_{\C^{n-a}}[n-a]\cong\stan{}{a}
    \end{split}
\end{equation*}
and similarly
\[
\mathcal{M}(a,a+1,-1)={\imath_{n-a}}_*\kk_{\C^{n-a}}[n-a]\cong\costan{}{a}
\]
for $0\leq a< n-1$, while when $b=a=n$ we have $\stan{}{n}=\costan{}{n}=S_n$ since $\imath_{n,0}=\imath_0\colon S_0\to X$. 
\end{remark}

Note that it makes sense to say that a perverse sheaf $\mathcal{F}\in\perv{m}{X}$ is a brick, meaning that $\mathrm{End}_{\constr{c}{X}}(\mathcal{F})\cong \kk$. Moreover, observe that under the equivalence between $\perv{m}{X}$ and $\mod{B(n)}$, a perverse sheaf is a brick if and only if the corresponding $B(n)$-module is a brick.

\begin{lemma}
    For any $0\leq a < b \leq n$ the objects $\mathcal{M}(a,b,1)$ and $\mathcal{M}(a,b,-1)$ are  bricks.
\end{lemma}
\begin{proof}
    By \cite[Lemma 3.3.3]{Achar} the intermediate extension functor is fully faithful. By \cite[Proposition 1.4.17(iii)]{BBD}, the functors $^m{\jmath_{n,n-k}}_!$ and $^m{\jmath_{n,n-k}}_!$ are fully faithful. Finally, the functor ${\imath_{n,n-a}}_*$ is exact. Hence, $\End{\mathcal{M}(a,b,\pm1)}\cong\End{\kk_{\C^{n-a}}[n-a]}\cong\kk$. 
\end{proof}

\begin{remark}\label{rem:iterated_exts} By the definition of the intermediate extension functor, one can note that the thin objects in (\ref{thin_perverse1}) and (\ref{thin_perverse2}) are given by the unique non-zero extensions
    \[
    [\mathcal{M}(a,b,1)]\in\Ext{}{1}{\ic{}{a}}{\mathcal{M}(a+1,b, -1)}\quad\hbox{and}\quad[\mathcal{M}(a,b,-1)]\in\Ext{}{1}{\mathcal{M}(a+1,b,1)}{\ic{}{a}},
    \]
using the base case 
\[
    [\stan{}{a}]\in\Ext{}{1}{\ic{}{a}}{\ic{}{a-1}} \quad \hbox{and} \quad [\costan{}{a}]\in\Ext{}{1}{\ic{}{a-1}}{\ic{}{a}}.
    \] 
for $0\leq a <n$.
\end{remark}

\begin{proposition}\label{prop : perverse walls}
    A full list of the walls in the wall and chamber structure of $\perv{m}{\PP^n}$ is given by simple perverse sheaves and opportune iterated extensions of simple and standard/costandard objects given by the formulae (\ref{thin_perverse1}) and (\ref{thin_perverse2}).
\end{proposition}
\begin{proof}
The $n+1$ simple perverse sheaves $\ic{}{r}$ are evidently thin. These objects $\mathcal{M}(a,b,\eta)$ for $0\leq a<b\leq n$ and $\eta\in\{\pm1\}$ are $n(n+1)$ by the proof of Corollary \ref{cor : number of *classes of strings} and Lemma \ref{ lemma : parameterisation of strings}. Since they are pairwise non-isomorphic, they are all the walls in the wall and chamber structure of $\perv{m}{X}$ by Corollary \ref{cor:number_of_walls}. By Remark \ref{rem:iterated_exts} every object in this list can be built iteratively starting from simple, standard and costandard objects.
\end{proof}
    
The following remark stemmed from discussions with Rosanna Laking who also made us aware of the paper \cite{MR4545219}.

\begin{remark}
    For the reader having some familiarity with stability conditions or $\tau$-tilting theory, we point out that by the results of \cite{MR4545219} and Proposition \ref{prop : perverse walls} the simple objects in any heart obtained from $\perv{m}{\PP^n}$ by a finite sequence of tilts are (shifts of) bricks.   
\end{remark}

\end{document}